\newtheorem{thm}{Theorem}[section]
\newtheorem{prop}[thm]{Proposition}
\newtheorem{cor}[thm]{Corollary}
\newtheorem{fact}[thm]{Fact}
\theoremstyle{remark}
\newtheorem{rem}[thm]{Remark}
\newcommand{\N}{\mathbb{N}}
\newcommand{\R}{\mathbb{R}}
\newcommand{\C}{\mathbb{C}}
\renewcommand{\H}{\mathbb{H}}
\newcommand{\e}{\mathfrak{e}}
\newcommand{\f}{\mathfrak{f}}
\newcommand{\g}{\mathfrak{g}}
\newcommand{\h}{\mathfrak{h}}
\renewcommand{\j}{\mathfrak{j}}
\renewcommand{\k}{\mathfrak{k}}
\renewcommand{\l}{\mathfrak{l}}
\newcommand{\q}{\mathfrak{q}}
\newcommand{\gl}{\mathfrak{gl}}
\renewcommand{\sl}{\mathfrak{sl}}
\newcommand{\su}{\mathfrak{su}}
\newcommand{\so}{\mathfrak{so}}
\renewcommand{\sp}{\mathfrak{sp}}
\newcommand{\To}{\sqrt{-1}\R}
\DeclareMathOperator{\SL}{SL}
\newcommand{\im}{\operatorname{im}}
\newcommand{\sgn}{\operatorname{sgn}}
\newcommand{\rank}{\operatorname{rank}}
\newcommand{\rest}{\operatorname{rest}}
\newcommand{\Stab}{\operatorname{Stab}}
\newcommand{\w}{\wedge}
\newcommand{\ox}{\otimes}
\newcommand{\bs}{\backslash}
\newcommand{\bl}{\bullet}
\newcommand{\injto}{\hookrightarrow}
\newcommand{\simto}{\xrightarrow{\sim}}
\renewcommand{\geq}{\geqslant}
\renewcommand{\leq}{\leqslant}
\begin{document}

\title[Semisimple symmetric spaces]{Semisimple symmetric spaces
that do not model any compact manifold}
\author{Yosuke Morita}
\address{Department of Mathematics, Graduate School of Science,
Kyoto University, Kitashirakawa Oiwake-cho, Sakyo-ku,
Kyoto 606-8502, Japan}
\email{yosuke.m@math.kyoto-u.ac.jp}

\begin{abstract}
In a previous paper,
we obtained a cohomological obstruction to the existence of
compact manifolds locally modelled on a homogeneous space.
In this paper, we give a classification of the semisimple
symmetric spaces to which this obstruction is applicable.
\end{abstract}

\maketitle

\section{Introduction}

A manifold is said to be locally modelled on a homogeneous space
$G/H$ if it is covered by open sets that are diffeomorphic to
open sets of $G/H$ and
the transition functions are left translations by elements of $G$.
A basic example of a manifold locally modelled on $G/H$
is a Clifford--Klein form, i.e.\ a quotient space $\Gamma \bs G/H$,
where $\Gamma$ is a discrete subgroup of $G$
acting properly and freely on $G/H$.
We always assume that
the transition functions satisfy the cocycle condition
(see \cite[\S 2]{Mor17PRIMS}).
This assumption is automatically satisfied if $G/H$
is connected and $G$ acts effectively on $G/H$.
Also, every Clifford--Klein form satisfies this assumption.

Since T. Kobayashi's initial paper \cite{Kob89Ann},
the study of topology of Clifford--Klein forms
has attracted considerable attention, and in particular,
various obstructions to the existence of \emph{compact}
Clifford--Klein forms of $G/H$ (or, more generally,
of \emph{compact} manifold locally modelled on $G/H$)
has been found
(see surveys \cite{Kob97}, \cite{Kob05}, \cite{Kob-Yos05},
\cite{Lab96} and references therein).

In a previous paper \cite{Mor15JDG},
we obtained the following obstruction:

\begin{fact}\label{fact:previous}
Let $G/H$ be a homogeneous space of reductive type and
$K_H$ a maximal compact subgroup of $H$.
Let $\g$, $\h$ and $\k_H$ denote the Lie algebras of
$G$, $H$ and $K_H$, respectively. If the homomorphism
$i: H^\bl(\g, \h; \R) \to H^\bl(\g, \k_H; \R)$
induced from the inclusion map
$(\Lambda (\g/\h)^\ast)^\h \injto (\Lambda (\g/\k_H)^\ast)^{\k_H}$
is not injective, then there does not exist
a compact manifold locally modelled on the homogeneous space $G/H$
(and, in particular,
there does not exist a compact Clifford--Klein form of $G/H$).
\end{fact}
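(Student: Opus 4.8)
The plan is to show that the existence of a compact manifold $M$ locally modelled on $G/H$ would force a certain characteristic homomorphism $\Phi\colon H^\bl(\g,\h;\R)\to H^\bl(M;\R)$ to be injective, contradicting the hypothesis that $i$ is not injective.

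First I would pass from $M$ to an auxiliary manifold modelled on $G/K_H$. The $G$-equivariant fibration $\pi\colon G/K_H\to G/H$ has fibre $H/K_H$, which by the Cartan decomposition $H=K_H\exp(\p_H)$ is diffeomorphic to a Euclidean space, hence contractible. Pulling the local charts of $M$ back along $\pi$ and regluing by the \emph{same} transition elements $g_{ij}\in G$ (this step uses the cocycle condition recalled in the introduction) produces a manifold $E$ locally modelled on $G/K_H$, together with a fibre bundle $\mathrm{proj}\colon E\to M$ whose fibre is contractible. A fibre bundle with contractible fibre over a manifold is a homotopy equivalence, so $\mathrm{proj}^\ast\colon H^\bl(M;\R)\simto H^\bl(E;\R)$.

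Next come the characteristic homomorphisms. On any manifold locally modelled on a homogeneous space $G/L$, a $G$-invariant form on $G/L$ restricts compatibly on the charts (left translations by $G$ preserve it) to a global form, and the de Rham differential on invariant forms is the Chevalley--Eilenberg differential; this yields a morphism of DGAs $(\Lambda(\g/\l)^\ast)^\l\to\Omega^\bl$, and hence $\Phi\colon H^\bl(\g,\h;\R)\to H^\bl(M;\R)$ and $J\colon H^\bl(\g,\k_H;\R)\to H^\bl(E;\R)$. Since the inclusion inducing $i$ is exactly pullback of invariant forms along $\pi$, and $\mathrm{proj}$ is $\pi$ chart by chart, one checks $\mathrm{proj}^\ast\circ\Phi=J\circ i$. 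As $\mathrm{proj}^\ast$ is an isomorphism, $\ker i\subseteq\ker\Phi$; thus if $i$ is not injective, neither is $\Phi$.

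It remains to prove that $\Phi$ is injective when $M$ is compact, which is the heart of the matter. I would use two inputs: (1) $H^\bl(\g,\h;\R)$ is a Poincar\'e duality algebra with top class in degree $m=\dim G/H$ represented by a $G$-invariant volume form — this comes from the classical identification of $H^\bl(\g,\h;\R)$ with the cohomology of a compact dual, a compact homogeneous space (after passing to a finite cover, or working with invariant forms on a compact Lie group, if the naive dual is singular); and (2) $\Phi$ sends that top class to the class of an honest volume form on $M$, which is nonzero in $H^m(M;\R)$ because its integral over the compact $M$ is positive (replace $M$ by its orientation double cover if needed). Given these, for $0\neq\alpha\in H^\bl(\g,\h;\R)$ pick $\beta$ with $\alpha\cup\beta$ the top class; then $\Phi(\alpha)\cup\Phi(\beta)\neq0$, so $\Phi(\alpha)\neq0$, proving injectivity and completing the contradiction. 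The delicate points I expect are input (1) — the compact-dual/Poincar\'e-duality statement, together with the modular-function bookkeeping needed for the invariant volume form to exist on $G/H$ of reductive type — and the careful verification that $E$ and the homomorphisms $\Phi$, $J$ are well defined from the atlas data.
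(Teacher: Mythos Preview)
The paper does not prove Fact~\ref{fact:previous}; it is quoted as a result from the author's earlier work \cite{Mor15JDG} (for Clifford--Klein forms) and \cite{Mor17PRIMS} (for the locally-modelled case), so there is no in-paper argument to compare your proposal against. That said, your outline is essentially the argument of those references: construct the auxiliary manifold $E\to M$ with contractible fibre $H/K_H$ locally modelled on $G/K_H$ using the same transition data (this is where the cocycle condition is used), fit the characteristic maps $\Phi$ and $J$ into a commuting square with $i$ and the homotopy equivalence $\mathrm{proj}^\ast$, and then prove injectivity of $\Phi$ on a compact $M$ via Poincar\'e duality for $H^\bl(\g,\h;\R)$ together with nonvanishing of the invariant volume class. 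The points you flag as delicate---that $H^\bl(\g,\h;\R)$ is a Poincar\'e duality algebra (via identification with the cohomology of the compact dual $G_U/H_U$), that the $G$-invariant volume form exists on a reductive-type $G/H$, and the orientability bookkeeping---are precisely the ones handled in those papers, and your treatment of them is correct in outline.
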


\begin{rem}
\begin{enumerate}[(1)]
\item In \cite{Mor15JDG}, Fact~\ref{fact:previous}
was proved only for the case of Clifford--Klein forms,
but the same proof applies to the case of
manifolds locally modelled on $G/H$ (see \cite{Mor17PRIMS}).
Note that we are not sure if this is an essential generalization,
as we do not know any example of a compact manifold
locally modelled on a homogeneous space of reductive type
that is not a Clifford--Klein form.
\item For the case of Clifford--Klein forms,
Fact~\ref{fact:previous} was generalized by Tholozan \cite{Tho15+}
and the author \cite{Mor17Selecta}.
\end{enumerate}
\end{rem}

The purpose of this paper is to classify
the semisimple symmetric spaces $G/H$
to which Fact~\ref{fact:previous} is applicable.
Our main result is the following:

\begin{thm}\label{thm:classify}
For a semisimple symmetric pair $(\g, \h)$,
the following two conditions are equivalent:
\begin{enumerate}
\item[\textup{(A)}] The homomorphism
$i: H^\bl(\g, \h; \R) \to H^\bl(\g, \k_H; \R)$
induced from the inclusion map
$(\Lambda (\g/\h)^\ast)^\h \injto (\Lambda (\g/\k_H)^\ast)^{\k_H}$
is injective.
\item[\textup{(B)}] The pair $(\g, \h)$ is isomorphic
(up to possibly outer automorphisms)
to a direct sum of the following irreducible symmetric pairs
\textup{(B-1)}--\textup{(B-5)}.
\begin{enumerate}
\item[\textup{(B-1)}] $(\l, \l)$ ($l$: simple Lie algebra).
\item[\textup{(B-2)}] $(\l \oplus \l, \Delta \l)$
($\l$: simple Lie algebra).
\item[\textup{(B-3)}] $(\l_\C, \l)$
($\l_\C$: complex simple Lie algebra, $\l$: real form of $\l_\C$).
\item[\textup{(B-4)}] An irreducible symmetric pair $(\g', \h')$
with $\rank \h' = \rank \k_{H'}$,
where $\k_{H'}$ is a maximal compact subalgebra of $\h'$.
\item[\textup{(B-5)}] \begin{itemize}
\item $(\sl(2n+1, \C), \so(2n+1, \C))$ $(n \geq 1)$,
\item $(\sl(2n, \C), \sp(n, \C))$ $(n \geq 2)$,
\item $(\so(2n, \C), \so(2n-1, \C))$ $(n \geq 3)$,
\item $(\e_{6,\C}, \f_{4,\C})$.
\end{itemize}
\end{enumerate}
\end{enumerate}
\end{thm}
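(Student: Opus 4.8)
The plan is to reformulate condition (A) as the survival of the fibre's fundamental class in a fibration of compact homogeneous spaces, and then to run this through the Berger classification of irreducible semisimple symmetric pairs. First I would reduce to the irreducible case: by the K\"unneth formula for relative Lie algebra cohomology one has $H^\bl(\g_1\oplus\g_2,\h_1\oplus\h_2;\R)\cong H^\bl(\g_1,\h_1;\R)\otimes_\R H^\bl(\g_2,\h_2;\R)$, compatibly with $\k_H=\k_{H_1}\oplus\k_{H_2}$, so the map $i$ attached to a direct sum is $i_1\otimes i_2$; as every group in sight is nonzero in degree $0$, this is injective if and only if each factor is. Since the pairs (B-1)--(B-5) are all irreducible, it remains to prove that an irreducible semisimple symmetric pair satisfies (A) if and only if it is isomorphic, up to outer automorphisms, to one of them.

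The key tool is a Poincar\'e-duality criterion. For a symmetric pair the Chevalley--Eilenberg differential on $(\Lambda\q^\ast)^\h$ vanishes, so $H^\bl(\g,\h;\R)=(\Lambda\q^\ast)^\h$; since $\q\cong\q^\ast$ as an $\h$-module via the Killing form, the top exterior power $\Lambda^{\dim\q}\q^\ast$ is the trivial module and $(\Lambda\q^\ast)^\h$ is a Poincar\'e duality $\R$-algebra, and likewise for $H^\bl(\g,\k_H;\R)$. As the kernel of a graded algebra homomorphism out of a Poincar\'e duality algebra is either zero or contains the top class, (A) is equivalent to: $i$ does not annihilate the fundamental class $\omega\in H^{\dim\q}(\g,\h;\R)$. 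Fixing a Cartan involution commuting with the symmetric one and passing to the compact duals $G_U\supset H_U\supset K_H$, there are natural isomorphisms $H^\bl(\g,\h;\R)\otimes\C\cong H^\bl(G_U/H_U;\C)$ and $H^\bl(\g,\k_H;\R)\otimes\C\cong H^\bl(G_U/K_H;\C)$ carrying $i$ to the pullback along the fibre bundle $H_U/K_H\to G_U/K_H\to G_U/H_U$. Applying Poincar\'e duality also on $G_U/K_H$ and on the fibre, $i(\omega)\neq0$ becomes equivalent to the fibre-restriction map $H^{\dim(\p\cap\h)}(G_U/K_H;\C)\to H^{\dim(\p\cap\h)}(H_U/K_H;\C)$ being nonzero; that is, (A) holds if and only if the fundamental cohomology class of the fibre survives to $E_\infty$ in the Serre spectral sequence.

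For (B) $\Rightarrow$ (A) I would verify this criterion family by family. For (B-1), $\q=0$ and $\omega=1$, so there is nothing to prove. For (B-2) the bundle is $L_U/K_L\to(L_U\times L_U)/\Delta K_L\to L_U$ and $x\mapsto[(x,e)]$ is a section of the projection $[(a,b)]\mapsto ab^{-1}$, so $i$ is even split injective; (B-3) is handled the same way, its compact dual being again a group manifold fibred thus. For (B-4), the hypothesis $\rank\h=\rank\k_H$ gives, via Borel's theorem, that $H^\bl(BK_H;\C)\to H^\bl(H_U/K_H;\C)$ is surjective; this map factors through $H^\bl(G_U/K_H;\C)$ because the classifying map of $G_U\to G_U/K_H$ restricts on the fibre to that of $H_U\to H_U/K_H$, so fibre-restriction is surjective and Leray--Hirsch forces the spectral sequence to degenerate, whence $i$ is injective. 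Finally (B-5) calls for an explicit computation: using the cohomology rings of the relevant spaces --- built from $\mathrm{SU}(2n+1)/\mathrm{SO}(2n+1)$, $\mathrm{SU}(2n)/\mathrm{Sp}(n)$, $\mathrm{SO}(2n)/\mathrm{SO}(2n-1)$ and $E_6/F_4$ --- one checks that all differentials that could hit the fibre's fundamental class vanish, and that these four pairs are precisely the ones where this occurs.

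The converse, (A) $\Rightarrow$ (B), is the substance of the paper, and I would prove the contrapositive. For an irreducible pair outside the list one typically has $\rank\h>\rank\k_H$, so $H^\bl(H_U/K_H;\C)$ carries an exterior algebra on $\rank\h-\rank\k_H$ odd generators and the fibre's fundamental class is the product of these with the top class of the remaining even part; one must then exhibit one such odd generator whose transgression in the Serre spectral sequence of $H_U/K_H\to G_U/K_H\to G_U/H_U$ is nonzero and persists, forcing $i(\omega)=0$. I would organise this over Berger's table, treating ``$\g$ complex simple'' and ``$\g$ simple of other type'' separately and comparing the Poincar\'e polynomials and the degrees of the primitive generators of $G_U/H_U$, $H_U/K_H$ and $G_U/K_H$ --- equivalently, the exponents of the relevant root systems. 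The main obstacle is exactly this: there is no single transgression computation valid across the board, the four pairs of (B-5) are ``accidental'' survivors that must be isolated one by one, and so the classification ultimately reduces to a careful entry-by-entry comparison through Berger's list, together with the explicit cohomology computations needed to clear (B-5) and to rule out its near misses. I expect this bookkeeping to be the longest and most delicate part of the argument.
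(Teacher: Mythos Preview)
Your proposal is correct and takes a genuinely different route from the paper's proof. The paper works entirely through the algebraic criterion proved in \cite{Mor17+} (Fact~\ref{fact:CO}): condition~(A) holds if and only if the restriction map $\rest:(P_{\g^\ast})^{-\theta}\to(P_{\h^\ast})^{-\theta}$ on $(-1)$-eigenspaces of primitive elements is surjective. Your Poincar\'e-duality reformulation and passage to the fibration $H_U/K_H\to G_U/K_H\to G_U/H_U$ is a topological restatement of the same criterion (the odd generators of $H^\bl(H_U/K_H;\C)$ are exactly the elements of $(P_{\h^\ast})^{-\theta}$), so the two frameworks are equivalent; but the arguments built on them differ.

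For $(\textup{B})\Rightarrow(\textup{A})$: the paper handles (B-3) by $c$-duality (Proposition~\ref{prop:c-dual}) reducing it to (B-2), (B-4) by observing $(P_{\h^\ast})^{-\theta}=0$ when $\rank\h=\rank\k_H$, and (B-5) by checking that $\rest:(S\g^\ast)^\g\to(S\h^\ast)^\h$ is surjective via explicit generators (Fact~\ref{fact:f_k} and \cite{Tak62}). Your section argument for (B-2)/(B-3) and your Borel/Leray--Hirsch argument for (B-4) are pleasant geometric alternatives that avoid importing Fact~\ref{fact:CO}.

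For $\neg(\textup{B})\Rightarrow\neg(\textup{A})$: this is where the paper is more efficient. Rather than hunting for a transgression pair-by-pair, it introduces numerical invariants $d_k(\g)=\dim(P_{\g^\ast}^{2k-1})^{-\theta}$ for $k\le 4$, reads them off Table~\ref{table:degree-anti}, and disposes of almost every entry of Table~\ref{table:classify} by the single inequality $d_k(\g)<d_k(\h)$ (Proposition~\ref{prop:d_k}). Only five families survive (Table~\ref{table:remaining}), and these are reduced to two by $c$-duality and complexification (Propositions~\ref{prop:c-dual}, \ref{prop:real_form}); the last two fall to a rank count and a single degree comparison. Your plan would reach the same conclusion, but the paper's degree bookkeeping replaces the transgression analysis you anticipate with short dimension inequalities, so the ``longest and most delicate part'' you flag is in fact quite short.
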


\begin{rem}
By Fact~\ref{fact:previous}, there exists a compact manifold
locally modelled on a semisimple symmetric space $G/H$
only if the corresponding semisimple symmetric pair $(\g, \h)$
satisfies \textup{(B)}.
So far, we cannot exclude the possibility that the converse holds,
but it seems unlikely to be true.
At least, it is false for Clifford--Klein forms:
many of the irreducible symmetric spaces in (B-3)--(B-5)
do not to admit compact Clifford--Klein forms
(\cite{Kob89Ann}, \cite{Kob92Duke}, \cite{Ben96}, \cite{Oku13},
\cite{Tho15+}, \cite{Mor17Selecta}).
Note that all of the irreducible symmetric spaces in (B-1)--(B-2)
admit compact Clifford--Klein forms.
\end{rem}

In Table~\ref{table:classify},
we list all the irreducible symmetric pairs $(\g, \h)$
that do not satisfy the condition \textup{(B)} among Berger's
classification of the irreducible symmetric pairs~\cite{Ber57}.
By Fact~\ref{fact:previous},
there does not exist a compact manifold locally modelled on
an irreducible symmetric space $G/H$ if the corresponding pair
$(\g, \h)$ is listed in Table~\ref{table:classify}.

\medskip

\begin{center}
\begin{longtable}{llll} \toprule
& \multicolumn{1}{c}{$\g$} & \multicolumn{1}{c}{$\h$} & \multicolumn{1}{c}{Conditions} \\ \midrule
$\star$      & $\sl(2n, \C)$ & $\so(2n, \C)$
                    & $n \geq 1$ \\ \hline
             & $\sl(p+q, \C)$ & $\sl(p, \C) \oplus \sl(q, \C) \oplus \C$
                    & $p,q \geq 1$ \\ \hline
$\star\star$ & $\sl(p+q, \R)$ & $\so(p, q)$
                    & $p,q \geq 1$, $p,q$: odd \\ \hline
$\star$      & $\su(p, q)$ & $\so(p, q)$
                    & $p,q \geq 1$, $p,q$: odd \\ \hline
             & $\su(n, n)$ & $\sl(n, \C) \oplus \R$
                    & $n \geq 1$ \\ \hline
$\star\star$ & $\sl(2n, \R)$ & $\sl(n, \C) \oplus \To$
                    & $n \geq 2$ \\ \hline
$\star$      & $\sl(n, \H)$ & $\sl(n, \C) \oplus \To$
                    & $n \geq 2$ \\ \hline
             & $\sl(p+q, \R)$ & $\sl(p, \R) \oplus \sl(q, \R) \oplus \R$
                    & $p,q \geq 1$ \\ \hline
             & $\sl(p+q, \H)$ & $\sl(p, \H) \oplus \sl(q, \H) \oplus \R$
                    & $p,q \geq 1$ \\ \hline
$\star$      & $\so(p+q, \C)$ & $\so(p, \C) \oplus \so(q, \C)$
                    & $p,q \geq 2$, $(p,q) \neq (2,2)$ \\ \hline
$\star$      & $\so(2n+1, \C)$ & $\so(2n, \C)$
                    & $n \geq 1$ \\ \hline
             & $\so(2n, \C)$ & $\sl(n, \C) \oplus \C$
                    & $n \geq 3$ \\ \hline
$\star\star$ & $\so(n, n)$ & $\so(n, \C)$
                    & $n \geq 3$ \\ \hline
$\star$      & $\so^\ast(2n)$ & $\so(n, \C)$
                    & $n \geq 3$ \\ \hline
                & & & $p,q \geq 1$, $p,q$: odd, \\
$\star\star$ & $\so(p+r,q+s)$ & $\so(p,q) \oplus \so(r,s)$
                    & $r,s \geq 0$, $(r,s) \neq (0,0)$, \\
                & & & $(p,q,r,s) \neq (1,1,1,1)$ \\ \hline
             & $\so(n, n)$ & $\sl(n, \R) \oplus \R$
                    & $n \geq 3$ \\ \hline
             & $\so^\ast(4n)$ & $\sl(n, \H) \oplus \R$
                    & $n \geq 2$ \\ \hline
             & $\sp(n, \C)$ & $\sl(n, \C) \oplus \C$
                    & $n \geq 1$ \\ \hline
$\star$      & $\sp(p+q, \C)$ & $\sp(p, \C) \oplus \sp(q, \C)$
                    & $p, q \geq 1$ \\ \hline
$\star$      & $\sp(2n, \R)$ & $\sp(n, \C)$
                    & $n \geq 1$ \\ \hline
$\star$      & $\sp(n, n)$ & $\sp(n, \C)$
                    & $n \geq 1$ \\ \hline
             & $\sp(n, \R)$ & $\sl(n, \R) \oplus \R$
                    & $n \geq 1$ \\ \hline
             & $\sp(n, n)$ & $\sl(n, \H) \oplus \R$
                    & $n \geq 1$ \\ \hline
$\star$      & $\e_{6,\C}$ & $\sp(4, \C)$ & --- \\ \hline
$\star$      & $\e_{6,\C}$ & $\sl(6, \C) \oplus \sl(2, \C)$ & --- \\ \hline
             & $\e_{6,\C}$ & $\so(10, \C) \oplus \C$ & --- \\ \hline
$\star$      & $\e_{6(6)}$ & $\sl(6, \R) \oplus \sl(2, \R)$ & --- \\ \hline
$\star\star$ & $\e_{6(6)}$ & $\sl(3, \H) \oplus \su(2)$ & --- \\ \hline
$\star$      & $\e_{6(-26)}$ & $\sl(3, \H) \oplus \su(2)$ & --- \\ \hline
             & $\e_{6(6)}$ & $\so(5,5) \oplus \R$ & --- \\ \hline
             & $\e_{6(-26)}$ & $\so(9,1) \oplus \R$ & --- \\ \hline
$\star$      & $\e_{7,\C}$ & $\sl(8, \C)$ & --- \\ \hline
$\star$      & $\e_{7,\C}$ & $\so(12, \C) \oplus \sl(2, \C)$ & --- \\ \hline
             & $\e_{7,\C}$ & $\e_{6,\C} \oplus \C$ & --- \\ \hline
             & $\e_{7(7)}$ & $\sl(8, \R)$ & --- \\ \hline
             & $\e_{7(7)}$ & $\sl(4, \H)$ & --- \\ \hline
             & $\e_{7(-25)}$ & $\sl(4, \H)$ & --- \\ \hline
             & $\e_{7(7)}$ & $\e_{6(6)} \oplus \R$ & --- \\ \hline
             & $\e_{7(-25)}$ & $\e_{6(-26)} \oplus \R$ & --- \\ \hline
$\star$      & $\e_{8,\C}$ & $\so(16, \C)$ & --- \\ \hline
$\star$      & $\e_{8,\C}$ & $\e_{7,\C} \oplus \sl(2, \C)$ & --- \\ \hline
$\star$      & $\f_{4,\C}$ & $\sp(3, \C) \oplus \sl(2, \C)$ & --- \\ \hline
$\star$      & $\f_{4,\C}$ & $\so(9, \C)$ & --- \\ \hline
$\star$      & $\g_{2,\C}$ & $\sl(2, \C) \oplus \sl(2, \C)$ & --- \\
\bottomrule
\caption{$(\g, \h)$ not satisfying \textup{(B)}}
\label{table:classify}
\end{longtable}
\end{center}

\vspace{-1.2cm}

In Table~\ref{table:classify},
the signs $\star\star$ and $\star$ signify
\begin{itemize}
\item[$\star\star$:]
The nonexistence of compact Clifford--Klein forms of $G/H$
seems to be not known before \cite{Mor15JDG}.
\item[$\star$:]
The nonexistence of compact Clifford--Klein forms of $G/H$
had been known before \cite{Mor15JDG},
but not for the locally modelled case.
\end{itemize}
Note that we saw in \cite[Cor.\ 1.4]{Mor15JDG}
the nonexistence of compact Clifford--Klein forms of $\star\star$
except for the case $(\e_{6(6)}, \sl(3, \H) \oplus \su(2))$.

\begin{rem}
Our proof of Theorem~\ref{thm:classify} relies on Berger's
classification of the irreducible symmetric pairs~\cite{Ber57}.
Thus, we do not know why the conditions (B-1)--(B-5)
looks relatively simple. In particular,
we do not know why the pairs listed in the (B-5) are all complex.
\end{rem}

\begin{rem}[cf.\ {\cite[\S 0.1.5]{Kas09},
\cite[\S 4]{Kob01unlimited}, \cite[\S 4.3]{Kob05}}]
We mention some previous works on
the existence problem of compact Clifford--Klein forms.
To the best of the author's knowledge,
there are basically four methods to study this problem
(this paper is based on the method (ii)):
\begin{enumerate}[(i)]
  \item A criterion for properness in terms of the Cartan projection
  (e.g.\ \cite{Kob89Ann}, \cite{Kob92Duke}, \cite{Ben96}, \cite{Oku13}).
  \item Comparison of relative Lie algebra cohomology and de Rham cohomology
  (e.g.\ \cite{Kob-Ono90}, \cite{Ben-Lab92}, \cite{Mor15JDG},
  \cite{Tho15+}, \cite{Mor17Selecta}).
  \item Zimmer's cocycle superrigidity
  (e.g.\ \cite{Zim94}, \cite{Cor-Zim94}, \cite{LMZ95}, \cite{Lab-Zim95}).
  \item Estimate of the decay of matrix coefficients
  (e.g.\ \cite{Mar97}, \cite{Oh98}, \cite{Sha00}).
\end{enumerate}
Each of them has its own advantage and applies to different examples.
For example, in the case of $\SL(n,\R)/\SL(m,\R) \ (n>m \geq 2)$,
where $\SL(m,\R)$ is put in the upper-left corner of $\SL(n,\R)$
unless otherwise stated, each method gives the following results:
\begin{itemize}
  \item Kobayashi~\cite{Kob92Duke} (method (i)):
  If $(n-1)/3 \geq \lfloor (m+1)/2 \rfloor$ and $m \geq 2$,
  then $\SL(n,\R)/\SL(m,\R)$ does not have a compact Clifford--Klein form.
  \item Zimmer~\cite{Zim94}, Labourie--Mozes--Zimmer~\cite{LMZ95},
  Labourie--Zimmer~\cite{Lab-Zim95} (method (iii)):
  If $n-3 \geq m \geq 2$, then $\SL(n,\R)/\SL(m,\R)$ does not have a compact Clifford--Klein form.
  If, in addition, $n \geq 2m$,
  then there does not exist a compact manifold locally modelled on $G/H$.
  \item Benoist~\cite{Ben96} (method (i)):
  If $m \geq 2$ is even, then every discrete subgroup $\Gamma$ of $\SL(m+1,\R)$
  acting properly and freely on $\SL(m+1,\R)/\SL(m,\R)$ is virtually abelian, and in particular, $\SL(m+1,\R)/\SL(m,\R)$ does not have a compact Clifford--Klein form.
  \item Margulis~\cite{Mar97} (method (iv)):
  Let $\alpha_n$ be the irreducible representation of $\SL(2, \R)$ into $\SL(n, \R)$.
  If $n \geq 4$, then $\SL(n, \R)/\alpha_n(\SL(2, \R))$ does not admit a compact Clifford--Klein form.
  \item Shalom~\cite{Sha00} (method (iv)):
  If $n \geq 4$, then $\SL(n,\R)/\SL(2,\R)$ does not have a compact Clifford--Klein form.
  \item Tholozan~\cite{Tho15+}, Morita~\cite{Mor17Selecta} (method (ii)):
  If $m \geq 2$ is even and $n>m$, then $\SL(n,\R)/\SL(m,\R)$ does not have a compact Clifford--Klein form.
\end{itemize}
We remark that $\SL(n,\R)/\SL(m,\R)$ and
$\SL(n,\R)/\alpha_n(\SL(2,\R))$ are not semisimple symmetric spaces.
Among these methods, (iii) and (iv)
are not applicable to the case of semisimple symmetric spaces,
whereas they sometimes give sharper results in the nonsymmetric case.
\end{rem}

Main results of this paper, along with an outline of proofs,
were announced in \cite{Mor15Proc}.
We shall give somewhat simpler proofs based on a necessary and
sufficient condition for the injectivity of the homomorphism
$i : H^\bl(\g, \h; \R) \to H^\bl(\g, \k_H; \R)$
obtained in \cite{Mor17+} (Fact~\ref{fact:CO}).
The author apologizes for the long delay in writing this paper.

\begin{rem}
We take this opportunity to correct some errors in
\cite{Mor15Proc}:
\begin{itemize}
\item In \cite[Th.\ 2.1]{Mor15Proc}, the pair
$(\sl(2n,\C), \sp(n,\C))$ ($n \geq 1$) should be
$(\sl(2n,\C), \sp(n,\C))$ ($n \geq 2$).
\item The pairs
$(\sp(2n, \R), \sp(n, \C))$ ($n \geq 1$) and
$(\sp(n, n), \sp(n, \C))$ ($n \geq 1$)
should be labelled as $\star$.
\item Since $\so(4,\C)$, $\so(2,2)$ and $\so^\ast(4)$ are
not simple Lie algebras,
the pairs
$(\g, \h)$ with $\g = \so(4,\C)$, $\so(2,2)$ or $\so^\ast(4)$
should not be listed in the table.
\end{itemize}
\end{rem}

\section{Preliminaries}

\subsection{Reductive pairs and semisimple symmetric pairs}

We say that $(\g, \h)$ is a reductive pair if $\g$
is a (real) reductive Lie algebra with Cartan involution $\theta$
and $\h$ is a subalgebra of $\g$ such that $\theta(\h) = \h$.
We then put $\k = \g^\theta$ and $\k_H = \h^\theta$.
Similarly, we say that a homogeneous space $G/H$
is of reductive type if $G$ is a linear reductive Lie group with
Cartan involution $\theta$ and $H$ is a closed subgroup of $G$
with finitely many connected components such that $\theta(H) = H$.
We then put $K = G^\theta$ and $K_H = H^\theta$.
Note that $K$ and $K_H$ are maximal compact subgroups of
$G$ and $H$, respectively.

If $\g$ is a semisimple Lie algebra and
$\h = \g^\sigma$ for some involution $\sigma$ of $\g$,
we call $(\g, \h)$ a semisimple symmetric pair.
In this case, there exists a Cartan involution $\theta$ of $\g$
such that $\theta \sigma = \sigma \theta$
(\cite[Lem.\ 10.2]{Ber57}),
and any other Cartan involution that commutes with $\sigma$
is of the form $\exp(X) \theta \exp(-X)$, where $X \in \h$
(\cite[Lem.\ 4]{Mat79}).
Therefore, $(\g, \h)$ can be seen as a reductive pair
in a natural way.
Similarly, if $G$ is a connected linear semisimple Lie group and
$H$ is an open subgroup of $G^\sigma$ for some involution $\sigma$
of $G$, we call $G/H$ a semisimple symmetric space, which
has a natural structure of homogeneous space of reductive type.
We say that a semisimple symmetric pair $(\g, \h)$
is an irreducible symmetric pair if $\g$ is simple or $(\g, \h)$
is isomorphic (up to possibly outer automorphisms) to
$(\l \oplus \l, \Delta \l)$ for some simple Lie algebra $\l$.
A semisimple symmetric space is called
an irreducible symmetric space
if the corresponding semisimple symmetric pair is irreducible.
Every semisimple symmetric pair is uniquely decomposed into
irreducible ones.
The complete classification of the irreducible symmetric pairs
(up to possibly outer automorphisms)
is accomplished by Berger~\cite{Ber57}.

\subsection{The graded algebra $(S \g^\ast)^\g$ and the graded vector space $P_{\g^\ast}$}

Let us recall some basic results on
the algebra $(S \g^\ast)^\g$ of $\g$-invariant polynomials on $\g$.

\begin{fact}[Chevalley restriction theorem, see e.g.\ {\cite[Ch.~VIII, \S8, no.~3, Th.~1]{BouLie7-9}}]\label{fact:CRT}
Let $\g$ be a (real or complex) reductive Lie algebra.
Let $\j$ be a Cartan subalgebra of $\g$ and
$W$ the associated Weyl group.
Then, the restriction map
\[ \rest : (S\g^\ast)^\g \to (S \j^\ast)^W \]
is an isomorphism.
\end{fact}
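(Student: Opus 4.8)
The plan is to reduce to the case where $\g$ is a complex semisimple Lie algebra and then verify, in turn, that $\rest$ is well defined, injective, and surjective. For the reduction: if $\g$ is reductive with center $\mathfrak{z}$, then $\mathfrak{z}$ lies in every Cartan subalgebra, so $\j = \mathfrak{z} \oplus (\j \cap [\g,\g])$, the $W$-action is trivial on $\mathfrak{z}$, and the statement splits off the semisimple part; and if $\g$ is real semisimple, complexification identifies $(S\g^\ast)^\g \otimes_\R \C$ with $(S\g_\C^\ast)^{\g_\C}$ and $(S\j^\ast)^W \otimes_\R \C$ with $(S\j_\C^\ast)^W$, so it is enough to treat $\g_\C$. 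Well-definedness is then immediate: realizing $W$ as $N_G(\j)/Z_G(\j)$ for the adjoint group $G$, any $G$-invariant polynomial on $\g$ restricts to a $W$-invariant polynomial on $\j$.

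For injectivity I would invoke the density of regular elements. Every Cartan subalgebra of $\g$ is $G$-conjugate to $\j$, the union of all Cartan subalgebras is the set of semisimple elements, and the regular semisimple elements $\g^{\mathrm{reg}}$ form a nonempty Zariski-open subset of $\g$. Hence a $\g$-invariant polynomial vanishing on $\j$ vanishes on $G \cdot \j \supseteq \g^{\mathrm{reg}}$, and therefore vanishes identically.

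Surjectivity is the heart of the matter, and I would establish it by a complex-analytic extension argument. Given $f \in (S\j^\ast)^W$, first note that two elements of $\j$ are $G$-conjugate precisely when they lie in one $W$-orbit; hence there is a well-defined $G$-invariant function $F$ on $\g^{\mathrm{reg}}$ with $F(x) = f(h)$ whenever $x$ is conjugate to $h \in \j$, and since near any point of $\g^{\mathrm{reg}}$ one can solve $x = \operatorname{Ad}(g(x))\, h(x)$ with $g(x), h(x)$ holomorphic (the orbit map $G \times \j^{\mathrm{reg}} \to \g^{\mathrm{reg}}$ being a submersion), $F = f \circ h$ is holomorphic on $\g^{\mathrm{reg}}$. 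The complement of $\g^{\mathrm{reg}}$ is the zero set of a discriminant polynomial; at a generic point of it the semisimple part of $x$ lies on a single root wall and varies continuously, so $F$ stays locally bounded there and extends holomorphically across the smooth part of this divisor by Riemann's removable-singularity theorem, while the remaining locus has codimension $\geq 2$ and is covered by Hartogs' theorem. Thus $F$ is entire on $\g \cong \C^N$; it grows at most polynomially (the coordinate $h$ growing at most linearly in $x$), hence is a polynomial; by continuity it is $\g$-invariant, and $\rest(F) = f$ by construction.

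The step I expect to be the main obstacle is exactly this extension of $F$ past the discriminant locus: it requires the density of $\g^{\mathrm{reg}}$, the holomorphic choice of $g(x)$ and $h(x)$ (a transversality statement), and a careful verification of local boundedness near generic singular points before Riemann's and Hartogs' theorems apply. An alternative, purely algebraic route to surjectivity avoids this: for a faithful representation $(\rho, V)$ of $\g$ the functions $x \mapsto \operatorname{tr}(\rho(x)^k)$ are $\g$-invariant polynomials whose restrictions to $\j$ are the power sums $\sum_\mu (\dim V_\mu)\, \mu(h)^k$ over the weights $\mu$ of $V$, and one shows that these generate $(S\j^\ast)^W$, using that $(S\j^\ast)^W$ is a polynomial algebra (Chevalley--Shephard--Todd) together with a comparison of Hilbert series via the injectivity already established. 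There the delicate point is instead to show that the chosen invariants generate all of $(S\j^\ast)^W$.
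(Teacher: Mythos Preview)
The paper does not prove this statement: it is recorded as a ``Fact'' with a citation to Bourbaki and no argument is given. So there is nothing in the paper to compare your proof against; the author simply imports the Chevalley restriction theorem as background.

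That said, a few remarks on your sketch. The analytic-extension route to surjectivity is a legitimate strategy (essentially the one in Varadarajan's or Steinberg's treatments), and your identification of the delicate step is accurate. One loose point: the justification ``the coordinate $h$ growing at most linearly in $x$'' is not right as stated, since the map $x \mapsto h(x)$ is highly nonlinear; what you actually need is that the eigenvalues of $\operatorname{ad}(x)$ (or of $\rho(x)$) are bounded by a constant times $\lVert x\rVert$, which gives $\lvert f(h)\rvert \leq C(1+\lVert x\rVert)^{\deg f}$ and hence polynomial growth of $F$.

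Your alternative algebraic route has a gap as written. Saying that the trace-power restrictions generate $(S\j^\ast)^W$ ``using that $(S\j^\ast)^W$ is a polynomial algebra together with a comparison of Hilbert series via the injectivity already established'' is circular: injectivity only gives $\dim (S^k\g^\ast)^\g \leq \dim (S^k\j^\ast)^W$, and you have no independent computation of the left-hand side to force equality. The standard algebraic argument instead proves directly that the $W$-orbit sums of weights (equivalently, restrictions of characters of finite-dimensional representations) span $(S\j^\ast)^W$ degree by degree, without appealing to Hilbert series; this is what Bourbaki does in the cited reference.
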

\begin{fact}[Chevalley--Shepherd--Todd, see e.g.\
{\cite[Th.~3.5 and Prop.~3.7]{Hum90}}]\label{fact:CST}
Let $\g$ be a (real or complex) reductive Lie algebra
of rank $r$.
Let $\j$ be a Cartan subalgebra of $\g$ and
$W$ the associated Weyl group.
Then, $(S \j^\ast)^W$ (or equivalently, $(S \g^\ast)^\g$)
is generated by $r$
algebraically independent homogeneous elements $(P_1, \dots, P_r)$
of positive degree.
The degrees $(\deg P_1, \dots, \deg P_r)$
do not depend on the choice of $(P_1, \dots, P_r)$.
\end{fact}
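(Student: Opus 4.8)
The plan is to prove this through the Shephard--Todd--Chevalley circle of ideas, using that $W$, being the Weyl group of the root system of $\g$ with respect to $\j$, is a finite group generated by reflections. By Fact~\ref{fact:CRT} the two invariant rings are isomorphic, so it suffices to treat $R := (S\j^\ast)^W$, which I regard as a graded subalgebra of $S := S\j^\ast$; the argument is insensitive to whether the base field is $\R$ or $\C$. Write $I \subset S$ for the Hilbert ideal, i.e.\ the ideal generated by all homogeneous elements of $R$ of positive degree.

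First I would record the standard finiteness facts for a finite group action: by Hilbert's (or Noether's) theorem $R$ is a finitely generated graded algebra and $S$ is a finite $R$-module, so $\operatorname{Frac}(S)$ is algebraic over $\operatorname{Frac}(R)$ and hence both have transcendence degree $r = \dim \j$. Thus any algebraically independent family in $R$ has at most $r$ elements. Choose then homogeneous elements $P_1, \dots, P_m$ of positive degree forming a minimal generating set of $R$ (equivalently, lifting a basis of $I/\mathfrak{m}I$, where $\mathfrak{m}$ is the irrelevant ideal of $S$).

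The heart of the proof---and the only place where the reflection hypothesis is essential---is Chevalley's lemma: if $P_1$ does not lie in the $R$-ideal generated by $P_2, \dots, P_m$ and if $\sum_i g_i P_i = 0$ with $g_i \in S$ homogeneous, then $g_1 \in I$. I would prove this by induction on $\deg g_1$, using that for each reflection $s \in W$, with reflecting hyperplane $\{\ell_s = 0\}$, and each $f \in S$, the difference $s \cdot f - f$ is divisible by the linear form $\ell_s$; since $W$ is generated by such $s$, averaging gives the divisibility needed to lower the degree. From this lemma one deduces that $P_1, \dots, P_m$ are algebraically independent: a nonzero relation $F(P_1, \dots, P_m) = 0$ of least degree, homogeneous for the weighting $y_i \mapsto \deg P_i$, would, upon differentiating, applying the chain rule together with Euler's identity, and feeding the resulting syzygy into Chevalley's lemma, force $P_1$ into the ideal generated by the remaining $P_j$, contradicting minimality of the generating set.

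Algebraic independence together with the transcendence-degree bound forces $m = r$, so $R$ is the polynomial algebra on the algebraically independent homogeneous generators $P_1, \dots, P_r$, as claimed. The uniqueness of the degrees is then formal: for a polynomial algebra on homogeneous generators of degrees $d_1, \dots, d_r$ the Hilbert--Poincar\'e series is $\sum_{k \geq 0} (\dim R_k)\, t^k = \prod_{i=1}^{r} (1 - t^{d_i})^{-1}$, whose left-hand side depends only on $R$; comparing the two sides shows that the multiset $\{d_1, \dots, d_r\}$ is determined by $R$ alone. The main obstacle is Chevalley's lemma and its use in establishing algebraic independence; the finiteness statements, the transcendence count, and the Hilbert-series uniqueness are routine.
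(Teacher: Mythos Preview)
The paper does not give its own proof of this statement: Fact~\ref{fact:CST} is quoted from the literature with a pointer to \cite[Th.~3.5 and Prop.~3.7]{Hum90}, and no argument is supplied. Your outline is a correct sketch of exactly the Chevalley--Shephard--Todd proof found in that reference (Chevalley's key lemma on the Hilbert ideal, algebraic independence of a minimal generating set via the chain rule and Euler's identity, the transcendence-degree count to pin down $m=r$, and the Hilbert--Poincar\'e series for uniqueness of the degrees), so there is no divergence of approach to discuss.
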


For a complex simple Lie algebra $\g$,
the degrees of algebraically independent generators of
$(S \g^\ast)^\g \ (\simeq (S \j^\ast)^W)$ are as follows
(see e.g.\ \cite[p.~59]{Hum90}):

\begin{center}
\begin{longtable}{ll} \toprule
\multicolumn{1}{c}{$\g$} & \multicolumn{1}{c}{Degrees} \\ \midrule
$\sl(n, \C)$ & $2, 3, \dots, n$ \\ \hline
$\so(2n+1, \C)$ & $2, 4, \dots, 2n$ \\ \hline
$\sp(n, \C)$ & $2, 4, \dots, 2n$ \\ \hline
$\so(2n, \C)$ & $2, 4, \dots, 2n-2, n$ \\ \hline
$\e_{6,\C}$ & $2,5,6,8,9,12$ \\ \hline
$\e_{7,\C}$ & $2,6,8,10,12,14,18$ \\ \hline
$\e_{8,\C}$ & $2,8,12,14,18,20,24,30$ \\ \hline
$\f_{4,\C}$ & $2,6,8,12$ \\ \hline
$\g_{2,\C}$ & $2,6$ \\
\bottomrule
\caption{Degrees of generators of $(S \g^\ast)^\g$}
\label{table:degree-sym}
\end{longtable}
\end{center}

\vspace{-1.2cm}

If $\g$ is abelian,
the graded algebra $(S \g^\ast)^\g = S \g^\ast$
is generated by the elemnts of degree $1$.

For the classical cases, the structure of graded algebra
$(S \g^\ast)^\g$ is explicitly described as follows:
\begin{fact}[see e.g.\ {\cite[Ch.~VIII, \S 13]{BouLie7-9}}]\label{fact:f_k}
Let $f_k \in (S^k (\gl(n, \C)^\ast))^{\gl(n, \C)}$
$(k=1, 2, \dots, n)$ denote invariant polynomials defined by
\[
\det(\lambda I_n - X) =
\lambda^n + f_1(X) \lambda^{n-1} + f_2(X) \lambda^{n-2}
+ \dots + f_n(X) \quad (X \in \gl(n, \C)).
\]
We use the same notation $f_k$ for the restriction of $f_k$ to
$\sl(n, \C)$, $\so(n, \C)$ or $\sp(m, \C)$ (if $n=2m$). Then,
\begin{itemize}
\item The graded algebra
$(S (\sl(n, \C)^\ast))^{\sl(n, \C)}$ is the polynomial algebra of
$(n-1)$ variables $f_2, f_3, \dots, f_n$.
We have $f_1 = 0$.
\item If $n = 2m+1$,
the graded algebra
$(S (\so(n, \C)^\ast))^{\so(n, \C)}$ is the polynomial algebra of
$m$ variables $f_2, f_4, \dots, f_{2m}$.
We have $f_1 = f_3 = \dots = f_{2m+1} = 0$.
\item If $n = 2m$,
the graded algebra
$(S (\sp(n, \C)^\ast))^{\sp(n, \C)}$ is the polynomial algebra of
$m$ variables $f_2, f_4, \dots, f_{2m}$.
We have $f_1 = f_3 = \dots = f_{2m-1} = 0$.
\item If $n = 2m$,
the graded algebra
$(S (\so(n, \C)^\ast))^{\so(n, \C)}$ is the polynomial algebra of
$m$ variables $f_2, f_4, \dots, f_{2m-2}, \tilde{f}$, where
$\tilde{f} \in (S^m (\so(n, \C)^\ast))^{\so(n, \C)}$
is the Pfaffian of $n \times n$ skew-symmeric matrices.
We have $f_1 = f_3 = \dots = f_{2m-1} = 0$ and
$f_{2m} = \tilde{f}^2$.
\end{itemize}
\end{fact}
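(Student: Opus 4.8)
The plan is to establish the invariance of the polynomials $f_k$ directly and then pass to a Cartan subalgebra by means of the Chevalley restriction theorem (Fact~\ref{fact:CRT}), computing the restriction of each $f_k$ from the eigenvalues of a regular element in the standard representation and matching these with standard Weyl group invariants. First I would note that each $f_k$, being a coefficient of $\det(\lambda I_n - X)$, is invariant under conjugation, so it lies in $(S^k \g^\ast)^\g$ and is homogeneous of degree $k$. By Fact~\ref{fact:CRT} the map $\rest : (S\g^\ast)^\g \to (S\j^\ast)^W$ is an isomorphism, so it suffices to identify the restrictions $f_k|_\j$ and to recall the structure of $(S\j^\ast)^W$ in each case. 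Throughout I write $E_k$ for the $k$-th elementary symmetric polynomial.

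For $\gl(n,\C)$ the Cartan subalgebra $\tilde{\j}$ consists of diagonal matrices $\operatorname{diag}(x_1,\dots,x_n)$, the eigenvalues are the $x_i$, and $f_k|_{\tilde{\j}} = (-1)^k E_k(x_1,\dots,x_n)$; since $W = S_n$, the fundamental theorem on symmetric functions gives $(S\tilde{\j}^\ast)^{S_n} = \C[E_1,\dots,E_n]$, whence $(S\gl(n,\C)^\ast)^{\gl(n,\C)} = \C[f_1,\dots,f_n]$. For $\sl(n,\C)$ the Cartan subalgebra $\j$ is the trace-zero part, on which $E_1 = \sum x_i$ vanishes, so $f_1 = 0$. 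As $S_n$-modules $\tilde{\j} = \j \oplus \C I_n$, with $\C I_n$ the trivial summand dual to $E_1$, hence $(S\tilde{\j}^\ast)^{S_n} = (S\j^\ast)^{S_n} \otimes \C[E_1]$. Comparing with the polynomiality of both sides (Fact~\ref{fact:CST}), this forces $(S\sl(n,\C)^\ast)^{\sl(n,\C)}$ to be freely generated by the images of $E_2,\dots,E_n$, i.e.\ by $f_2,\dots,f_n$.

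For the orthogonal and symplectic cases I would use that the nonzero eigenvalues of a regular element of $\j$ occur in pairs $\pm x_1,\dots,\pm x_m$, together with one extra eigenvalue $0$ in the odd orthogonal case. Consequently the characteristic polynomial is a polynomial in $\lambda^2$ (times $\lambda$ in the odd case), so every odd coefficient vanishes on $\j$, and by injectivity of $\rest$ the asserted identities $f_1 = f_3 = \dots = 0$ hold; the even coefficients satisfy $f_{2j}|_\j = (-1)^j E_j(x_1^2,\dots,x_m^2)$. For $\so(2m+1,\C)$ (type $B_m$) and $\sp(2m,\C)$ (type $C_m$) the Weyl group is the hyperoctahedral group $(\Z/2)^m \rtimes S_m$ of all permutations and sign changes of the $x_i$, whose invariant ring is the polynomial algebra on $E_1(x^2),\dots,E_m(x^2)$; matching degrees identifies these with $f_2,f_4,\dots,f_{2m}$ and gives the stated polynomial algebras on $m$ generators.

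The delicate case, and the one I expect to be the main obstacle, is $\so(2m,\C)$ (type $D_m$), where the Weyl group is the index-two subgroup of the hyperoctahedral group consisting of permutations together with \emph{even} sign changes, so its invariant ring is no longer generated by the $E_j(x^2)$ alone: one must adjoin the product $x_1\cdots x_m$, which is invariant under even sign changes but not under the full hyperoctahedral group. Two things need to be pinned down: (i) that this extra generator is the restriction of a genuine $\so(2m,\C)$-invariant polynomial, for which I would take the Pfaffian $\tilde{f}$ and use $\operatorname{Pf}(gXg^{\mathsf{T}}) = \det(g)\operatorname{Pf}(X)$ to see that $\tilde{f}$ is invariant under $\SO(2m,\C)$, hence $\tilde{f} \in (S^m \so(2m,\C)^\ast)^{\so(2m,\C)}$; and (ii) the relation $f_{2m} = \tilde{f}^2$, which follows from the classical identity $\det X = \operatorname{Pf}(X)^2$ for skew-symmetric $X$ together with $f_{2m}(X) = \det(-X) = \det X$ when $n = 2m$. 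Verifying on $\j$ that $\tilde{f}|_\j$ is, up to a harmless scalar, the product $x_1\cdots x_m$, and that $E_1(x^2),\dots,E_{m-1}(x^2)$ together with $x_1\cdots x_m$ are algebraically independent generators of the $D_m$-invariant ring, then yields the polynomial algebra on $f_2,\dots,f_{2m-2},\tilde{f}$ with the single relation $f_{2m} = \tilde{f}^2$, completing the last bullet.
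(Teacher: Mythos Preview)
The paper does not prove this statement: it is stated as a \emph{Fact} with a reference to Bourbaki \cite[Ch.~VIII, \S 13]{BouLie7-9} and no proof is given. Your argument is correct and is essentially the standard proof one finds in that reference---pass to a Cartan subalgebra via the Chevalley restriction theorem, identify the restrictions of the $f_k$ with elementary symmetric functions in the eigenvalues (or their squares), and recognise the resulting generators of $(S\j^\ast)^W$ for each classical Weyl group, with the Pfaffian supplying the extra degree-$m$ generator in type $D_m$.

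One small wording issue: in the last sentence you speak of ``the polynomial algebra on $f_2,\dots,f_{2m-2},\tilde f$ with the single relation $f_{2m}=\tilde f^2$''. A polynomial algebra has no relations; what you mean is that $(S\,\so(2m,\C)^\ast)^{\so(2m,\C)}$ is the free polynomial algebra on $f_2,\dots,f_{2m-2},\tilde f$, and that the identity $f_{2m}=\tilde f^2$ expresses the remaining characteristic coefficient in terms of these generators. This is cosmetic and does not affect the mathematics.
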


Let us then recall the definition of
the space $P_{\g^\ast}$ of primitive elements in
$(\Lambda \g^\ast)^\g$, and its relation with the algebra
$(S \g^\ast)^\g$ (see \cite{GHV76} and \cite{Oni94} for details).

For a (real or complex) reductive Lie algebra $\g$,
we denote by $P_{\g^\ast}$
the space of primitive elements in $(\Lambda \g^\ast)^\g$, i.e.\
\[
P_{\g^\ast} = \{ \alpha \in (\Lambda^+ \g^\ast)^\g :
\text{$\alpha(x \w y) = 0$ for all
$x, y \in (\Lambda^+ \g)^\g$} \},
\]
where $\Lambda^+$ denotes the positive degree part of
the exterior algebra.
Then $P_{\g^\ast}$ is oddly graded:
$P_{\g^\ast} = \bigoplus_{k \geq 1} P_{\g^\ast}^{2k-1}$
(\cite[Ch.~5, \S 5]{GHV76}).

We define the Cartan map
$\rho_\g : (S^k \g^\ast)^\g \to (\Lambda^{2k-1} \g)^\g$
($k \geq 1$) by the following formula:
for $X_1, \dots, X_{2k-1} \in \g$,
\begin{align*}
&\rho_\g(P)(X_1, \dots, X_{2k-1}) \\
&\qquad = \sum_{\sigma \in \mathfrak{S}_{2k-1}}
\sgn(\sigma) P(X_{\sigma(1)},
[X_{\sigma(2)}, X_{\sigma(3)}], \dots,
[X_{\sigma(2k-2)}, X_{\sigma(2k-1)}]).
\end{align*}
\begin{rem}
Up to scalar multiple, the above definition of $\rho_\g$
coincides with the definition in \cite[Ch.~VI, \S2]{GHV76},
which uses the Weil algebra (see \cite[Ch.~VI, Prop.~IV]{GHV76})
\end{rem}
If $\g$ is reductive, $\rho_\g$ induces a linear isomorphism
\[
\overline{\rho_\g} :
(S^+ \g^\ast)^\g / ((S^+ \g^\ast)^\g \cdot (S^+ \g^\ast)^\g)
\simto P_{\g^\ast}
\]
(\cite[Ch.~VI, Th.~2]{GHV76}).
Therefore, one can derive the degrees of a basis of $P_{\g^\ast}$,
called the exponents of $\g$,
from Table~\ref{table:degree-sym}:

\begin{center}
\begin{longtable}{ll} \toprule
\multicolumn{1}{c}{$\g$} & \multicolumn{1}{c}{Degrees} \\ \midrule
$\sl(n, \C)$ & $3, 5, 7, \dots, 2n-1$ \\ \hline
$\so(2n+1, \C)$ & $3, 7, 11, \dots, 4n-1$ \\ \hline
$\sp(n, \C)$ & $3, 7, 11, \dots, 4n-1$ \\ \hline
$\so(2n, \C)$ & $3, 7, 11, \dots, 4n-5, 2n-1$ \\ \hline
$\e_{6,\C}$ & $3,9,11,15,17,23$ \\ \hline
$\e_{7,\C}$ & $3,11,15,19,23,27,35$ \\ \hline
$\e_{8,\C}$ & $3,15,23,27,35,39,47,59$\\ \hline
$\f_{4,\C}$ & $3,11,15,23$ \\ \hline
$\g_{2,\C}$ & $3,11$ \\
\bottomrule
\caption{Degrees of a basis of $P_{\g^\ast}$}
\label{table:degree}
\end{longtable}
\end{center}

\vspace{-1.2cm}

If $\g$ is abelian, the graded vector space $P_{\g^\ast}$ is
concentrated in degree $1$.

If $(\g, \h)$ is a reductive pair, the restriction map
$\rest : (\Lambda \g^\ast)^\g \to (\Lambda \h^\ast)^\h$
induces a linear map
\[
\rest : P_{\g^\ast} \to P_{\h^\ast}.
\]
Similarly, the restriction map
$\rest : (S \g^\ast)^\g \to (S \h^\ast)^\h$ induces a linear map
\[
\overline{\rest} :
(S^+ \g^\ast)^\g / ( (S^+ \g^\ast)^\g \cdot (S^+ \g^\ast)^\g ) \to
(S^+ \h^\ast)^\h / ( (S^+ \h^\ast)^\h \cdot (S^+ \h^\ast)^\h ).
\]
The following diagram commutes:
\[
\xymatrix{
(S^+ \g^\ast)^\g / ( (S^+ \g^\ast)^\g \cdot (S^+ \g^\ast)^\g ) \ar[r]^{\qquad\qquad\quad\ \sim}_{\qquad\qquad\quad\ \overline{\rho_\g}} \ar[d]_{\overline{\rest}} &
P_{\g^\ast} \ar[d]^{\rest} \\
(S^+ \h^\ast)^\h / ( (S^+ \h^\ast)^\h \cdot (S^+ \h^\ast)^\h ) \ar[r]^{\qquad\qquad\quad\ \sim}_{\qquad\qquad\quad\ \overline{\rho_\h}} &
P_{\h^\ast}
}
\]
(\cite[Ch.~VI, Prop.~2]{GHV76}).

\subsection{The injectivity of $i : H^\bl(\g, \h; \R) \to H^\bl(\g, \k_H; \R)$ and the graded vector space $(P_{\g^\ast})^{-\theta}$}

The following fact, proved in a previous paper \cite{Mor17+},
plays a foundational role in our classification:

\begin{fact}\label{fact:CO}
Let $(\g, \h)$ be a reductive pair with Cartan involution $\theta$.
Put $\k_H = \h^\theta$.
Then, the homomorphism
$i : H^\bl(\g, \h; \R) \to H^\bl(\g, \k_H; \R)$
is injective if and only if the linear map
$\rest: (P_{\g^\ast})^{-\theta} \to (P_{\h^\ast})^{-\theta}$
is surjective, where $(\, \cdot \,)^{-\theta}$ denotes
the $(-1)$-eigenspace for $\theta$.
\end{fact}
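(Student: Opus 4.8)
The plan is to compute both sides through the Cartan--Weil (Koszul) model for reductive pairs. Write $A=(S\h^\ast)^\h$, $B=(S\k_H^\ast)^{\k_H}$ and $V=P_{\g^\ast}$, and fix algebraically independent homogeneous generators $P_1,\dots,P_r$ of $(S\g^\ast)^\g$ with transgressions $\omega_i=\overline{\rho_\g}(P_i)$ forming a basis of $V$. The structure theorem for the cohomology of a reductive pair (\cite{GHV76}) supplies isomorphisms
\[
H^\bl(\g,\h;\R)\cong H(A\ox\Lambda V,\,d_A),\qquad
H^\bl(\g,\k_H;\R)\cong H(B\ox\Lambda V,\,d_B),
\]
where the Koszul differentials act by $d_A\omega_i=\rest_\h(P_i)\in A$ and $d_B\omega_i=\rest_{\k_H}(P_i)\in B$ and vanish on the base. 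By naturality in the subalgebra $\k_H\subseteq\h$, the map $i$ becomes $\phi\ox\id$, where $\phi=\rest\colon A\to B$; so I would need to decide when $\phi\ox\id$ is injective on cohomology. The excerpt's commutative diagram does the key translation: the class of $\rest_\h(P_i)$ in the indecomposables $\cong P_{\h^\ast}$ is exactly $\rest(\omega_i)$, so the linear part of $d_A$ is governed by the restriction of primitive elements.

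The engine of the argument is a single observation: since $\k_H\subseteq\k=\g^\theta$, the involution $\theta$ acts trivially on $B$. Choosing the $P_i$ to be $\theta$-eigenvectors (possible after averaging, as $\theta$ preserves the grading), $\theta$-equivariance of $\rho_\g$ and of restriction forces $\rest_{\k_H}(P_i)=0$ whenever $\omega_i\in(P_{\g^\ast})^{-\theta}$, and likewise $\phi$ annihilates the anti-invariant part $A^{-\theta}$. Writing $V=V^+\oplus V^-$ for the $\theta$-eigenspaces, this makes $d_B$ vanish on $V^-$, whence $H(B\ox\Lambda V,d_B)=H(B\ox\Lambda V^+,d_B')\ox\Lambda V^-$. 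I would also need three structural inputs about $\phi$, to be proved separately or cited from the companion paper: (a) $\ker\phi=(A^{-\theta})$, the ideal generated by the anti-invariants; (b) $\dim(P_{\h^\ast})^{-\theta}=\rank\h-\rank\k_H$, so that the anti-invariant indecomposable generators of $A$ form a regular sequence of this length; (c) $B$ is a finite free module over the image $A'=\phi(A)$, so that $A'$ is an $A'$-module direct summand of $B$.

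For the forward implication I would argue the contrapositive. Suppose $\rest\colon(P_{\g^\ast})^{-\theta}\to(P_{\h^\ast})^{-\theta}$ is not surjective and pick $\xi\in(P_{\h^\ast})^{-\theta}$ outside the image; lift it to a $\theta$-anti-invariant indecomposable $\tilde\xi\in A^{-\theta}$. Then $\tilde\xi$ is $d_A$-closed (it sits in exterior degree $0$) and $\phi(\tilde\xi)=0$ by (a), so $i[\tilde\xi]=0$. It remains to see $[\tilde\xi]\neq0$, i.e.\ $\tilde\xi\notin I_A:=(\rest_\h(P_1),\dots,\rest_\h(P_r))$. Since the indecomposable parts of the $\rest_\h(P_i)$ are the $\rest(\omega_i)$, membership $\tilde\xi\in I_A$ would force $\xi\in\rest(P_{\g^\ast})\cap(P_{\h^\ast})^{-\theta}=\rest((P_{\g^\ast})^{-\theta})$ (by $\theta$-equivariance), contradicting the choice of $\xi$. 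Thus $[\tilde\xi]$ is a nonzero element of $\ker i$.

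For the converse I would assume surjectivity and factor $i$ as $H(A\ox\Lambda V)\xrightarrow{\pi_\ast}H(A'\ox\Lambda V)\xrightarrow{j_\ast}H(B\ox\Lambda V)$ through $A'=\phi(A)$. Surjectivity lets me select, among the $\rest_\h(P_i)$ with $\omega_i\in V^-$, a regular sequence generating $\ker\phi$ (using (a) and (b)); a triangular change of the remaining $V^-$-generators $V^-=V^-_0\oplus V^-_1$ makes those in $V^-_1$ $d_A$-closed, and the Koszul resolution of the regular sequence identifies $H(A\ox\Lambda V)=H(A'\ox\Lambda V^+,d_{A'})\ox\Lambda V^-_1$ inside $H(A'\ox\Lambda V)=H(A'\ox\Lambda V^+,d_{A'})\ox\Lambda V^-$ as a unitriangular, hence injective, inclusion; this gives $\pi_\ast$ injective. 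For $j_\ast$, input (c) makes $A'\ox\Lambda V$ a direct summand of $B\ox\Lambda V$ as complexes, so $j_\ast$ is injective too, and therefore so is $i$. I expect the real obstacle to be precisely the structural facts (a)--(c): they are assertions about the restriction of invariants $(S\h^\ast)^\h\to(S\k_H^\ast)^{\k_H}$ and about the $\theta$-eigenvalues of the exponents of $\h$, and proving them cleanly---via Chevalley restriction on a $\theta$-stable fundamental Cartan subalgebra together with a Cohen--Macaulay freeness argument---is where the content lies; the Koszul bookkeeping above is then formal.
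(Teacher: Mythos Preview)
The paper does not prove Fact~\ref{fact:CO}; it is quoted verbatim from the companion preprint \cite{Mor17+} (``Proof of Kobayashi's rank conjecture on Clifford--Klein forms''), and the only additional content in the present paper is the remark immediately following it, which rephrases the criterion via the Cartan map in terms of the indecomposables of $(S\g^\ast)^\g$ and $(S\h^\ast)^\h$. So there is no in-paper argument to compare against, and your write-up should simply cite \cite{Mor17+} for the proof.

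That said, your Koszul-model outline is along the standard lines one expects for such a result, and your forward implication (constructing a nonzero class $[\tilde\xi]\in\ker i$ from an anti-invariant indecomposable not hit by $\rest$) is essentially correct once input~(a) is available. Two comments on the converse sketch. First, the ``triangular change of the remaining $V^-$-generators'' making $V^-_1$ $d_A$-closed cannot be a constant-coefficient change of basis of $V^-$: knowing only that $\rest(\omega_j')=0$ in $P_{\h^\ast}$ gives $d_A\omega_j'\in(A^+)^2$, not $d_A\omega_j'=0$. What does work is an $A$-linear (unipotent) automorphism of $A\otimes\Lambda V$, using that under surjectivity the ideal $\ker\phi=(A^{-\theta})$ is generated by the chosen $\rest_\h(P_{i_k})$; then $\omega_j-\sum_k a_k\,\omega_{i_k}$ with $a_k\in A$ is genuinely $d_A$-closed. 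You should state it that way. Second, your inputs (a)--(c) are precisely the substantive content: they are Chevalley/Helgason-type restriction theorems for the \emph{Riemannian} symmetric pair $(\h,\k_H)$ (so that $A^\theta$ maps onto $B$ and $\ker\phi=(A^{-\theta})$, with the anti-invariant generators forming a regular sequence of length $\rank\h-\rank\k_H$). These are indeed where the work lies, and any self-contained proof must establish or cite them; the Koszul bookkeeping is, as you say, formal once they are in hand.
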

\begin{rem}
The isomorphism
\[
\overline{\rho_\g} :
(S^+ \g^\ast)^\g / ( (S^+ \g^\ast)^\g \cdot (S^+ \g^\ast)^\g )
\simto P_{\g^\ast}
\]
induced by the Cartan map commutes with the Cartan involution
$\theta$.
Therefore, Fact~\ref{fact:CO} is rephrased as follows:
\emph{the homomorphism
$i : H^\bl(\g, \h; \R) \to H^\bl(\g, \k_H; \R)$
is injective if and only if the linear map
\[
\overline{\rest} :
( (S^+ \g^\ast)^\g / ( (S^+ \g^\ast)^\g \cdot (S^+ \g^\ast)^\g ) )^{-\theta}
\to ( (S^+ \h^\ast)^\h / ( (S^+ \h^\ast)^\h \cdot (S^+ \h^\ast)^\h ) )^{-\theta}
\]
is surjective.}
See \cite{Mor17+} for some other conditions equivalent to
the injectivity of $i: H^\bl(\g, \h; \R) \to H^\bl(\g, \k_H; \R)$.
\end{rem}

\subsection{Some properties of $(P_{\g^\ast})^{-\theta}$}

We mention some results on $(P_{\g^\ast})^{-\theta}$
that are useful when we verify whether
$\rest: (P_{\g^\ast})^{-\theta} \to (P_{\h^\ast})^{-\theta}$
is surjective or not.

\begin{fact}[cf.\ {\cite[Ch.~X, Prop.~VII]{GHV76}}]\label{fact:GHV-X7-1}
Let $\g$ be a reductive Lie algebra with Cartan involution
$\theta$. Put $\k = \g^\theta$.
Let $\rho_\g : (S^+ \g^\ast)^\g \to (\Lambda^+ \g)^\g$
be the Cartan map for $\g$. Then,
\[
(P_{\g^\ast})^{-\theta} =
\{ \rho_\g(P) : P \in (S^+ \g^\ast)^\g,\ P|_{\k} = 0 \}.
\]
\end{fact}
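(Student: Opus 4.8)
The statement to prove is Fact~\ref{fact:GHV-X7-1}, which characterizes $(P_{\g^\ast})^{-\theta}$ as the image under the Cartan map $\rho_\g$ of the invariant polynomials that vanish on $\k = \g^\theta$. The natural strategy is to exploit the fact, recorded just above in the excerpt, that $\overline{\rho_\g}$ intertwines the Cartan involution $\theta$ on $(S^+\g^\ast)^\g/((S^+\g^\ast)^\g\cdot(S^+\g^\ast)^\g)$ with $\theta$ on $P_{\g^\ast}$. So the real content is to identify the $(-1)$-eigenspace of $\theta$ on the quotient algebra $(S^+\g^\ast)^\g/((S^+\g^\ast)^\g\cdot(S^+\g^\ast)^\g)$ with the image of $\{P : P|_\k = 0\}$.

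\medskip

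First I would reduce to the case where $\g$ is semisimple: if $\g = \z \oplus \g_{\mathrm{ss}}$ with $\z$ the center, then $\theta$ acts trivially on $\z$, the Cartan involution on $\g_{\mathrm{ss}}$ restricts a Cartan involution, and both sides of the claimed equality decompose compatibly (the center contributes only degree-$1$ primitive elements, all $\theta$-fixed, and the corresponding linear invariants never vanish on $\k \supseteq \z$), so one may assume $\g$ semisimple. Next, the key structural input is that $\theta$ is an involutive automorphism, so $(S\g^\ast)^\g$ is a $\Z/2$-graded algebra: write $(S^+\g^\ast)^\g = A^+ \oplus A^-$ for the $\pm 1$-eigenspaces. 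Chevalley–Shepherd–Todd (Fact~\ref{fact:CST}) lets us choose a system of homogeneous algebraically independent generators; the subtle point is that one can choose them to be \emph{eigenvectors} of $\theta$. Granting such a choice, the decomposable ideal $(S^+\g^\ast)^\g\cdot(S^+\g^\ast)^\g$ is $\theta$-stable and the quotient has a basis given by the classes of the generators, each an eigenvector; the $(-1)$-eigenspace of the quotient is spanned by the classes of the generators lying in $A^-$.

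\medskip

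The remaining step is to match "$\theta$-eigenvalue $-1$" with "vanishing on $\k$." For a homogeneous invariant $P$ that is a $\theta$-eigenvector: if $\theta P = P$ then $P$ is determined by its restriction to $\k$ via the Chevalley restriction theorem applied to $\k$ (a $\theta$-fixed polynomial is pulled back from $\g/\!/\!\theta$, and nonzero such $P$ restricts nontrivially to a maximal torus lying in $\k$, hence $P|_\k \neq 0$); if $\theta P = -P$ then $P(\theta X) = -P(X)$ forces $P|_\k = 0$ since $\theta$ is the identity on $\k$. Conversely, decomposing an arbitrary $P$ with $P|_\k = 0$ into eigencomponents $P = P_+ + P_-$, the condition $P|_\k=0$ together with $P_-|_\k = 0$ (automatic) gives $P_+|_\k = 0$, whence $P_+ = 0$ by the injectivity just noted, so $P = P_-$ lies in $A^-$. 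This shows $\{P : P|_\k = 0\} = A^-$ (in positive degrees), and applying $\rho_\g$ — which kills exactly the decomposable ideal and induces the eigenspace-preserving isomorphism $\overline{\rho_\g}$ — yields $\rho_\g(A^-) = (P_{\g^\ast})^{-\theta}$, as claimed. I expect the main obstacle to be the assertion that the generators of $(S\g^\ast)^\g$ can be chosen to be $\theta$-eigenvectors: this needs the observation that $(S^+\g^\ast)^\g\cdot(S^+\g^\ast)^\g$ is $\theta$-stable and graded, so one can lift an eigenbasis of the quotient degree by degree, but one should state it carefully. Alternatively, and perhaps more cleanly, one can bypass explicit generators entirely: argue directly that $\{P : P|_\k = 0\}$ equals the $\theta$-anti-invariants in $(S^+\g^\ast)^\g$, using the Chevalley restriction isomorphism for $\g$ compared with that for $\k$ and the fact that $\k$ contains a Cartan subalgebra of $\g$ only when $\rank\k = \rank\g$ — so more carefully one uses that a $\theta$-fixed invariant polynomial restricts to an invariant polynomial on $\k$ and this restriction map is injective on $\theta$-fixed invariants, which is exactly the content of \cite[Ch.~X, Prop.~VII]{GHV76} cited in the statement. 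I would then quote that reference for the nontrivial injectivity and keep the eigenspace bookkeeping as the new part.
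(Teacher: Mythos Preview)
The paper does not prove this statement: it is recorded as a Fact with a citation to \cite[Ch.~X, Prop.~VII]{GHV76}, so there is no proof in the paper to compare against and the question is simply whether your argument is correct. Your overall strategy --- transport the problem through the $\theta$-equivariant isomorphism $\overline{\rho_\g}$ to the indecomposable quotient $Q = (S^+\g^\ast)^\g / ((S^+\g^\ast)^\g \cdot (S^+\g^\ast)^\g)$ and identify $Q^{-\theta}$ with the image of $\{P : P|_\k = 0\}$ --- is the right one, but the execution contains a genuine error. You assert the equality $\{P \in (S^+\g^\ast)^\g : P|_\k = 0\} = A^-$ at the level of $(S^+\g^\ast)^\g$ itself, arguing that a nonzero $\theta$-fixed invariant cannot vanish on $\k$. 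This is false: take $\g = \sl(3,\R)$, $\theta(X) = -X^T$, $\k = \so(3)$; the cubic invariant $f_3$ lies in $A^-$, hence $f_3|_\k = 0$, and therefore $f_3^2$ is a nonzero element of $A^+$ that vanishes on $\k$. Your heuristic about restricting to a maximal torus inside $\k$ breaks down precisely because $\rank\k$ may be strictly smaller than $\rank\g$. (There is also a minor slip in the reduction step: $\theta$ need not act trivially on the center of $\g$, e.g.\ $\gl(n,\R)$ with $\theta(X)=-X^T$; this does not break the reduction, only your stated justification for it.)

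What is actually true, and what suffices, is the equality \emph{modulo decomposables}: the image $\bar I$ of $I = \{P : P|_\k = 0\}$ in $Q$ equals $Q^{-\theta}$. The inclusion $Q^{-\theta} \subseteq \bar I$ follows from your easy direction $A^- \subseteq I$. For the reverse inclusion you need that any $\theta$-fixed invariant vanishing on $\k$ is \emph{decomposable} --- equivalently, that restriction to $\k$ induces an injection of $Q^\theta$ into the indecomposable quotient of $(S\k^\ast)^\k$, not an injection on all of $A^+$. This weaker injectivity (which one may phrase as: the kernel of $\rest : (S\g^\ast)^\g \to (S\k^\ast)^\k$ is the ideal generated by $A^-$) is the actual nontrivial input coming from \cite{GHV76}; your own counterexample $f_3^2$ shows it cannot be upgraded to the statement you wrote. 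So your plan to ``quote that reference for the nontrivial injectivity'' is right in spirit, but you have misidentified which injectivity you need, and as written the argument claims a false lemma.
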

\begin{fact}[{\cite[Ch.\ X, Cor.\ to Prop.\ VI]{GHV76}}]\label{fact:GHV-X7-2}
Let $\g$ be a reductive Lie algebra with Cartan involution
$\theta$. Put $\k = \g^\theta$. Then,
\[ \dim (P_{\g^\ast})^{-\theta} = \rank \g - \rank \k. \]
\end{fact}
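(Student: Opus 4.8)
The final statement to prove is Fact~\ref{fact:GHV-X7-2}: for a reductive Lie algebra $\g$ with Cartan involution $\theta$ and $\k = \g^\theta$, one has $\dim (P_{\g^\ast})^{-\theta} = \rank \g - \rank \k$.

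\medskip

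The plan is to combine Fact~\ref{fact:GHV-X7-1} with the Chevalley restriction theorem (Fact~\ref{fact:CRT}) and the structure theory of the isotropy restriction. By Fact~\ref{fact:GHV-X7-1}, $(P_{\g^\ast})^{-\theta} = \rho_\g(\{P \in (S^+\g^\ast)^\g : P|_\k = 0\})$, so I would first understand the kernel of the restriction map $\operatorname{rest} : (S\g^\ast)^\g \to (S\k^\ast)^\k$. Since $\rho_\g$ factors through the quotient $(S^+\g^\ast)^\g / ((S^+\g^\ast)^\g \cdot (S^+\g^\ast)^\g)$ and induces there an isomorphism onto $P_{\g^\ast}$ (by \cite[Ch.~VI, Th.~2]{GHV76}, quoted above), the dimension I am after equals the dimension of the image of $\ker(\operatorname{rest})$ in this quotient, i.e.\ the number of ``new'' generators of $(S\g^\ast)^\g$ that are not forced by generators of $(S\k^\ast)^\k$. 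So the heart of the matter is: $\operatorname{rest} : (S\g^\ast)^\g \to (S\k^\ast)^\k$, composed with the projection to indecomposables, is surjective, and its kernel has codimension exactly $\rank\k$ in the space of indecomposables of $(S\g^\ast)^\g$, which has dimension $\rank\g$.

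\medskip

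The key step is therefore to show that $\operatorname{rest} : (S^+\g^\ast)^\g/((S^+\g^\ast)^\g)^2 \to (S^+\k^\ast)^\k/((S^+\k^\ast)^\k)^2$ is \emph{surjective}. Once this is known, the kernel has dimension $\rank\g - \rank\k$ by Fact~\ref{fact:CST} (which gives the dimensions of the two indecomposable spaces as $\rank\g$ and $\rank\k$), and the conclusion follows from the discussion above. To prove surjectivity, I would pass to Cartan subalgebras: choose a $\theta$-stable Cartan subalgebra $\j$ of $\g$ containing a Cartan subalgebra $\j_\k$ of $\k$ — more precisely, take $\j_\k$ a Cartan subalgebra of $\k$ and extend by a maximal abelian subspace of $\p = \g^{-\theta}$ commuting with it; this is a Cartan subalgebra of $\g$ by the standard structure theory. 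Under the Chevalley isomorphisms (Fact~\ref{fact:CRT}) the restriction $(S\g^\ast)^\g \to (S\k^\ast)^\k$ becomes the restriction $(S\j^\ast)^{W(\g,\j)} \to (S\j_\k^\ast)^{W(\k,\j_\k)}$. Surjectivity of this map onto indecomposables is a statement about Weyl-group invariants that can be extracted from \cite[Ch.~X]{GHV76} (this is essentially \cite[Ch.~X, Prop.~VI]{GHV76}, whose corollary we are quoting): every $W(\k)$-invariant polynomial on $\j_\k$ extends, modulo decomposables, to a $W(\g)$-invariant polynomial on $\j$.

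\medskip

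The main obstacle I anticipate is precisely this surjectivity claim — it is not a formal consequence of $\theta$-stability alone and genuinely uses properties of the pair $(W(\g,\j), W(\k,\j_\k))$ arising from a symmetric space, in particular that $\j_\k^\perp \cap \j$ (the ``$\p$-part'' of the Cartan) carries a restricted root system (a Satake-type picture) whose Weyl group interacts compatibly with $W(\k)$. Rather than reprove this from scratch, I would cite \cite[Ch.~X, Prop.~VI]{GHV76} and its corollary directly: indeed the statement we are asked to prove \emph{is} listed as that corollary, so the honest proof is to verify that our normalization of $\rho_\g$ and our sign/grading conventions match those of \cite{GHV76} (the Remark after the definition of $\rho_\g$ already notes agreement up to scalar), and then invoke Fact~\ref{fact:GHV-X7-1} together with the counting argument above. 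If instead a self-contained argument is wanted, the one remaining routine check is that $\rho_\g$ restricted to $\ker(\operatorname{rest})$ has image of full dimension $\rank\g - \rank\k$, which follows because $\overline{\rho_\g}$ is injective on the indecomposable quotient and $\ker(\operatorname{rest})$ maps onto a subspace of that quotient of the stated codimension.
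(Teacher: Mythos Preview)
The paper does not give a proof of Fact~\ref{fact:GHV-X7-2}: it is stated as a cited fact from \cite[Ch.~X, Cor.\ to Prop.~VI]{GHV76}, with no argument supplied. Your proposal is a correct reconstruction of the line of reasoning behind that corollary, and you explicitly recognise this yourself when you say that ``the honest proof'' is to invoke the cited reference. So there is no discrepancy to report.

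One small comment on your sketch, for completeness: you reduce to the surjectivity of $\overline{\rest}$ on indecomposables and then compute $\dim\ker(\overline{\rest})=\rank\g-\rank\k$. Strictly speaking, what Fact~\ref{fact:GHV-X7-1} gives is $(P_{\g^\ast})^{-\theta}=\rho_\g(\ker\rest)$, so you need the image of $\ker(\rest)$ in the indecomposable quotient, not $\ker(\overline{\rest})$ itself. These coincide once $\rest:(S\g^\ast)^\g\to(S\k^\ast)^\k$ is surjective as a graded algebra map, and by Fact~\ref{fact:CST} (both sides are polynomial algebras) this full surjectivity is equivalent to the surjectivity of $\overline{\rest}$ you state. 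So the gap closes, but it is worth making that step explicit.
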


\begin{prop}\label{prop:degree}
For a simple Lie algebra $\g$ with Cartan involution $\theta$,
the degrees of a basis of $(P_{\g^\ast})^{-\theta}$ are as follows:
\end{prop}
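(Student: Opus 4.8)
The plan is to combine Facts~\ref{fact:GHV-X7-1} and~\ref{fact:GHV-X7-2} with the explicit generators of $(S\g^\ast)^\g$ in Table~\ref{table:degree-sym} and Fact~\ref{fact:f_k}, and to run through Berger's list of real simple Lie algebras. The mechanism is the following. Fix algebraically independent homogeneous generators $P_1, \dots, P_r$ of $(S^+\g^\ast)^\g$ (Fact~\ref{fact:CST}), of degrees $d_1, \dots, d_r$; then $(S\g^\ast)^\g = \R[P_1, \dots, P_r]$, so the classes of the $P_j$ form a basis of $(S^+\g^\ast)^\g/((S^+\g^\ast)^\g)^2$, and since $\overline{\rho_\g}$ is an isomorphism onto $P_{\g^\ast}$, the elements $\rho_\g(P_1), \dots, \rho_\g(P_r)$ form a basis of $P_{\g^\ast}$ with $\deg \rho_\g(P_j) = 2d_j - 1$. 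By Fact~\ref{fact:GHV-X7-1}, $(P_{\g^\ast})^{-\theta}$ is spanned by the $\rho_\g(P)$ with $P|_\k = 0$, where $\k = \g^\theta$. Hence, if for a given $\g$ we can point to $\rank\g - \rank\k$ of the generators $P_j$ that restrict to $0$ on $\k$, then $\rho_\g$ sends them to $\rank\g - \rank\k$ linearly independent elements of $(P_{\g^\ast})^{-\theta}$; since $\dim(P_{\g^\ast})^{-\theta} = \rank\g - \rank\k$ by Fact~\ref{fact:GHV-X7-2}, these must be a basis, and the sought degrees are the numbers $2d_j - 1$ attached to the chosen generators. (That generators with this property exist at all also follows from the two facts together with graded Nakayama, but in practice we just display them.) So everything reduces to identifying, for each real simple $\g$, enough invariant polynomials among the generators that vanish on $\k$.

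Whenever $\rank\g = \rank\k$, Fact~\ref{fact:GHV-X7-2} gives $(P_{\g^\ast})^{-\theta} = 0$ and there is nothing to do; by inspection of Berger's list this already covers $\su(p,q)$, $\so^\ast(2n)$, $\sp(n,\R)$, $\sp(p,q)$, $\so(p,q)$ with $p, q$ not both odd, all noncompact real forms of $\e_7$, $\e_8$, $\f_4$, $\g_2$, the forms $\e_{6(2)}$ and $\e_{6(-14)}$ of $\e_6$, and every compact form. When $\g = \l_\C$ with $\l_\C$ complex simple, regarded as a real Lie algebra, $\k$ is the compact real form $\l_u$, and $(S\g^\ast)^\g$ is generated by the real and imaginary parts of complex generators $P_1, \dots, P_r$ of $(S\l_\C^\ast)^{\l_\C}$, of degrees $d_1, \dots, d_r$. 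Since a homogeneous invariant polynomial of degree $d$ on $\l_u$ takes values in $(\sqrt{-1})^{d}\R$, for each $j$ exactly one of $\operatorname{Re}P_j, \operatorname{Im}P_j$ vanishes identically on $\l_u$; this produces $r = \rank\g - \rank\k$ vanishing generators of degrees $d_1, \dots, d_r$, so the degrees of $(P_{\g^\ast})^{-\theta}$ are precisely those of $P_{\l_\C^\ast}$ listed in Table~\ref{table:degree}.

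The remaining cases --- where $\g$ is noncomplex and $\rank\g > \rank\k$ --- are few. For $\g = \sl(n,\R)$ (so $\k = \so(n)$) and $\g = \sl(n,\H) = \su^\ast(2n)$ (so $\k = \sp(n)$), any element of $\k$ has purely imaginary eigenvalues occurring in pairs $\pm\sqrt{-1}\mu$, together with a single $0$ when the relevant size is odd; in the notation of Fact~\ref{fact:f_k} this forces $f_k|_\k = 0$ for every odd $k$, while the even $f_k|_\k$ remain algebraically independent. As the $f_k$ are among the standard generators, comparison with Fact~\ref{fact:GHV-X7-2} shows the odd-index $f_k$ are exactly the required vanishing generators, of degrees $3, 5, 7, \dots$ in the relevant range; hence the degrees of $(P_{\g^\ast})^{-\theta}$ are $5, 9, 13, \dots$ in the corresponding range. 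For $\g = \so(p,q)$ with $p, q$ both odd, $\rank\g - \rank\k = 1$, and the Pfaffian generator $\tilde f$ of $(S\g^\ast)^\g$ vanishes on $\k = \so(p) \oplus \so(q)$ because a skew-symmetric matrix block-diagonal with an odd-sized block is singular; so the only degree is $p + q - 1$. Finally, for $\g = \e_{6(6)}$ ($\k = \sp(4)$) and $\g = \e_{6(-26)}$ ($\k = \f_4$), $\rank\g - \rank\k = 2$, and since $(S\k^\ast)^\k$ is generated in even degrees only (Table~\ref{table:degree-sym}), the two odd-degree generators of $(S\e_{6,\C}^\ast)^{\e_{6,\C}}$, those of degrees $5$ and $9$, restrict identically to $0$ on $\k$; the degrees are $9$ and $17$.

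The work is thus almost entirely bookkeeping: one must record $\rank\g$ and $\rank\k$ for each entry of Berger's table and, in the finitely many cases with $\rank\g > \rank\k$, write down the vanishing generators above. The one point that genuinely needs care is confirming, in those cases, both that the invariant polynomials exhibited are honest algebra generators (equivalently, primitive) and that there are exactly $\rank\g - \rank\k$ of them --- the first because they occur among the standard generators of $(S\g^\ast)^\g$ (Fact~\ref{fact:f_k}, or Table~\ref{table:degree-sym} for $\e_6$), and the second because $\dim(P_{\g^\ast})^{-\theta}$ is already pinned down by Fact~\ref{fact:GHV-X7-2}.
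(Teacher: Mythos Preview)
Your proof is correct and follows essentially the same strategy as the paper: reduce to finding $\rank\g - \rank\k$ generators of $(S\g^\ast)^\g$ vanishing on $\k$ via Facts~\ref{fact:GHV-X7-1} and~\ref{fact:GHV-X7-2}, and then run through the cases using Fact~\ref{fact:f_k} and Table~\ref{table:degree-sym}. The only notable difference is in the complex simple case: the paper complexifies $\g_\R$ to $\l_\C \oplus \l_\C$ and observes that $\theta$ acts by swapping factors, whereas you stay over $\R$ and use that a homogeneous complex invariant of degree $d$ takes values in $(\sqrt{-1})^d\R$ on the compact form, so exactly one of $\operatorname{Re}P_j$, $\operatorname{Im}P_j$ vanishes there; both arguments yield the same list of degrees.
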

\begin{center}
\begin{longtable}{ll} \toprule
\multicolumn{1}{c}{$\g$} & \multicolumn{1}{c}{Degrees} \\ \midrule
$\sl(n, \R)$ & $5, 9, 13, \dots, 4 \lfloor \frac{n+1}{2} \rfloor -3$ \\ \hline
$\sl(n, \H)$ & $5, 9, 13, \dots, 4n-3$ \\ \hline
$\so(p,q)$ ($p,q$: odd) & $p+q-1$ \\ \hline
$\e_{6(6)}$ & $9,17$ \\ \hline
$\e_{6(-26)}$ & $9,17$ \\ \hline
$\sl(n, \C)$ & $3, 5, 7, \dots, 2n-1$ \\ \hline
$\so(2n+1, \C)$ & $3, 7, 11, \dots, 4n-1$ \\ \hline
$\sp(n, \C)$ & $3, 7, 11, \dots, 4n-1$ \\ \hline
$\so(2n, \C)$ & $3, 7, 11, \dots, 4n-5, 2n-1$ \\ \hline
$\e_{6,\C}$ & $3,9,11,15,17,23$ \\ \hline
$\e_{7,\C}$ & $3,11,15,19,23,27,35$ \\ \hline
$\e_{8,\C}$ & $3,15,23,27,35,39,47,59$\\ \hline
$\f_{4,\C}$ & $3,11,15,23$ \\ \hline
$\g_{2,\C}$ & $3,11$ \\ \hline
Otherwise & --- \\
\bottomrule
\caption{Degrees of a basis of $(P_{\g^\ast})^{-\theta}$}
\label{table:degree-anti}
\end{longtable}
\end{center}

\vspace{-1.2cm}

\begin{proof}
Although this proposition seems to be
already known in the early 1960s (cf.\ \cite{Tak62}),
we give its proof for the reader's convenience.

If $\rank \g = \rank \k$,
we have $(P_{\g^\ast})^{-\theta} = \{ 0 \}$ by
Fact~\ref{fact:GHV-X7-2}. We thus assume $\rank \g \neq \rank \k$.

Let $\g_\C$ denote the complexification of $\g$.
We use the same symbol $\theta$
for the complex linear extension of $\theta$ to $\g_\C$.
It suffices to compute the degrees of a basis of the
complexification
$(P_{\g^\ast})^{-\theta} \ox \C = (P_{\g_\C^\ast})^{-\theta}$.

Suppose that $\g$ is a complex simple Lie algebra.
We write $\g_\R$ for $\g$ regarded a real simple Lie algebra.
We have a natural isomorphism $(\g_\R)_\C \simeq \g \oplus \g$.
The Cartan involution $\theta$ acts on
$P_{(\g_\R)_\C^\ast} \simeq P_{\g^\ast} \oplus P_{\g^\ast}$
by $(\alpha_1, \alpha_2) \mapsto (\alpha_2, \alpha_1)$
($\alpha_1, \alpha_2 \in P_{\g^\ast}$). Therefore
$\dim (P_{(\g_\R)_\C^\ast}^k)^{-\theta} = \dim P_{\g^\ast}^k$
for every $k \in \N$, and Proposition~\ref{prop:degree}
in this case follows from Table~\ref{table:degree}.

Suppose that $\g = \sl(n, \R)$.
Then $(\g_\C, \k_\C) = (\sl(n, \C), \so(n, \C))$.
Let $f_k \in (S^k \g_\C^\ast)^{\g_\C}$ ($2 \leq k \leq n$)
be as in Fact~\ref{fact:f_k}.
The restriction of $f_k$ to $\k_\C$ vanishes if and only if
$k = 3, 5, \dots, 2 \lfloor \frac{n+1}{2} \rfloor -1$.
Since the images of $f_k$ ($2 \leq k \leq n$)
under the Cartan map form a basis of $P_{\g_\C^\ast}$,
we obtain from Fact~\ref{fact:GHV-X7-1}
that the degrees of a basis of $(P_{\g_\C^\ast})^{-\theta}$
are $5, 9, 13, \dots, 4 \lfloor \frac{n+1}{2} \rfloor -3$.

Suppose that $\g = \sl(n, \H)$.
Then $(\g_\C, \k_\C) = (\sl(2n, \C), \sp(n, \C))$.
The restriction of $f_k \in (S^k \g_\C^\ast)^{\g_\C}$
($2 \leq k \leq 2n$) to $\k_\C$ vanishes if and only if
$k = 3, 5, \dots, 2n-1$.
Thus, the degrees of a basis of $(P_{\g_\C^\ast})^{-\theta}$ are
$5, 9, 13, \dots, 4n-3$.

Suppose that $\g = \so(p, q)$, where $p$ and $q$ are odd.
Then $(\g_\C, \k_\C) = (\so(p+q, \C), \so(p,\C) \oplus \so(q,\C))$.
The restriction of $f_{2k} \in (S^{2k} \g_\C^\ast)^{\g_\C}$
($1 \leq k \leq \frac{p+q}{2} -1$) to $\k_\C$
is nonzero for every $k$, and that of
$\tilde{f} \in (S^{(p+q)/2} \g_\C^\ast)^{\g_\C}$ is zero.
Thus, $(P_{\g_\C^\ast})^{-\theta}$
is a 1-dimensional vector space concentrated in degree $p+q-1$.

Suppose that $\g = \e_{6(6)}$.
Then $(\g_\C, \k_\C) = (\e_{6,\C}, \sp(4,\C))$.
Let us fix algebraically independent generators
$g_2, g_5, g_6, g_8, g_9, g_{12}$ ($\deg g_k = k$) of the algebra
$(S \g_\C^\ast)^{\g_\C}$ (cf.\ Table~\ref{table:degree-sym}).
Notice from Table~\ref{table:degree-sym} that
the degrees of generators of $(S \k_\C^\ast)^{\k_\C}$
are all even.
Therefore, $(S^k \k_\C^\ast)^{\k_\C} = 0$ if $k$ is odd,
and in particular, $g_5|_{\k_\C} = g_9|_{\k_\C} = 0$.
By Fact~\ref{fact:GHV-X7-1}, the images of $g_5$ and $g_9$
under the Cartan map are nonzero elements of
$(P_{\g_\C^\ast})^{-\theta}$. Their degrees are 9 and 17,
respectively.
Since
$\dim (P_{\g_\C^\ast})^{-\theta} = \rank \g_\C - \rank \k_\C = 2$
(Fact~\ref{fact:GHV-X7-2}), they form a basis of
$(P_{\g_\C^\ast})^{-\theta}$.

Finally, suppose that $\g = \e_{6(-26)}$.
Then $(\g_\C, \k_\C) = (\e_{6,\C}, \f_{4,\C})$.
Again, $(S^k \k_\C^\ast)^{\k_\C} = 0$ if $k$ is odd, and
$\rank \g_\C - \rank \k_\C = 2$. The degrees of
$(P_{\g_\C^\ast})^{-\theta}$ are hence 9 and 17
by the same argument as the case of $\g = \e_{6(6)}$.
\end{proof}

\section{Some results on the surjectivity of $\rest : (P_{\g^\ast})^{-\theta} \to (P_{\h^\ast})^{-\theta}$}

Our goal is to classify the semisimple symmetric pairs $(\g, \h)$
such that $i : H^\bl(\g, \h; \R) \to H^\bl(\g, \k_H; \R)$
is injective.
In view of Fact~\ref{fact:CO}, it suffices to
classify $(\g, \h)$ such that restriction map
$\rest: (P_{\g^\ast})^{-\theta} \to (P_{\h^\ast})^{-\theta}$
is surjective.
In this section, we give some easy-to-check necessary conditions
and sufficient conditions for the surjectivity.

Although our main interest in this paper is the case of
semisimple symmetric spaces, many results in this section are
valid, more generally, for homogeneous spaces of reductive type.
Thus, we state some direct consequence of
these results in a general setting (Remark~\ref{rem:rank},
Corollaries~\ref{cor:d_k}, \ref{cor:complex} and
\ref{cor:real_form}),
while they are not used in this paper.

\subsection{Comparison of the total dimensions}

As we saw in \cite{Mor17+},
we obtain the following necessary condition for
the surjectivity of the restriction map
$\rest : (P_{\g^\ast})^{-\theta} \to (P_{\h^\ast})^{-\theta}$
from Fact~\ref{fact:GHV-X7-2}:
\begin{prop}\label{prop:rank}
Let $(\g, \h)$ be a reductive pair with Cartan involution $\theta$.
If $\rank \g - \rank \k < \rank \h - \rank \k_H$,
the restriction map
$\rest : (P_{\g^\ast})^{-\theta} \to (P_{\h^\ast})^{-\theta}$
is not surjective.
\end{prop}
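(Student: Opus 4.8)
The plan is to read this off immediately from the dimension formula of Fact~\ref{fact:GHV-X7-2}. First I would recall the standard fact, already used implicitly in the preliminaries, that if $(\g,\h)$ is a reductive pair with Cartan involution $\theta$, then $\theta|_\h$ is a Cartan involution of $\h$; hence $\k_H = \h^\theta$ is a maximal compact subalgebra of $\h$, and Fact~\ref{fact:GHV-X7-2} applies to $\h$ equipped with this Cartan involution just as well as to $\g$. This gives
\[
\dim (P_{\g^\ast})^{-\theta} = \rank \g - \rank \k
\qquad \text{and} \qquad
\dim (P_{\h^\ast})^{-\theta} = \rank \h - \rank \k_H .
\]

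Next I would invoke that the restriction map $\rest : (\Lambda \g^\ast)^\g \to (\Lambda \h^\ast)^\h$ commutes with $\theta$ (since $\theta(\h) = \h$), hence carries $(P_{\g^\ast})^{-\theta}$ into $(P_{\h^\ast})^{-\theta}$, as recalled in Section~2.2. A surjective linear map cannot strictly increase dimension: if $\rest : (P_{\g^\ast})^{-\theta} \to (P_{\h^\ast})^{-\theta}$ were surjective, then $\dim (P_{\g^\ast})^{-\theta} \geq \dim (P_{\h^\ast})^{-\theta}$, that is, $\rank \g - \rank \k \geq \rank \h - \rank \k_H$, contradicting the hypothesis. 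Taking the contrapositive yields the statement.

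There is essentially no obstacle here; the argument is a pure dimension count, and the only inputs that need to be in place have already been recorded in the preliminaries (that $\theta|_\h$ is a Cartan involution of $\h$, and that $\rest$ restricts to a map between the $(-\theta)$-eigenspaces of the spaces of primitive elements). It is worth emphasizing that this gives only a necessary condition: the genuinely delicate analysis, taken up in the subsequent sections, concerns the pairs with $\rank \g - \rank \k \geq \rank \h - \rank \k_H$, where the dimension comparison alone is inconclusive and one must actually track the degrees of bases of $(P_{\g^\ast})^{-\theta}$ and $(P_{\h^\ast})^{-\theta}$ (cf.\ Proposition~\ref{prop:degree}).
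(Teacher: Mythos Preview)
Your proof is correct and follows exactly the same approach as the paper: apply Fact~\ref{fact:GHV-X7-2} to both $\g$ and $\h$ to compute the dimensions of the two $(-\theta)$-eigenspaces, and observe that a surjective linear map cannot strictly increase dimension. The paper's version is simply the one-line inequality display, without the surrounding commentary you added, but the argument is identical.
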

\begin{proof}
By Fact~\ref{fact:GHV-X7-2},
\[
\dim (P_{\g^\ast})^{-\theta} = \rank \g - \rank \k
< \rank \h - \rank \k_H = \dim (P_{\h^\ast})^{-\theta}. \qedhere
\]
\end{proof}
\begin{rem}\label{rem:rank}
Kobayashi's rank conjecture~\cite[Conj.~6.4]{Kob89Geom}
follows immediately from Facts~\ref{fact:previous}, \ref{fact:CO}
and Proposition~\ref{prop:rank} (see \cite{Mor17+}).
\end{rem}

\subsection{Comparison of the dimensions of low degree parts}

Let $\g$ be a reductive Lie algebra and write $\g$ as
\[
\g \simeq \R^{n^+} \oplus \sqrt{-1}\R^{n^-} \oplus \bigoplus_{\substack{\text{$\l$: simple} \\ \text{Lie algebra}}} m_{\l} \cdot \l.
\]
We then put
\begin{align*}
d_1(\g) &= n^+, \\
d_2(\g) &= \sum_{\substack{\text{$\l$: complex simple} \\ \text{Lie algebra}}} m_{\l}, \\
d_3(\g) &= \sum_{k \geq 3} m_{\sl(k,\R)} + \sum_{k \geq 3} m_{\sl(k,\C)} + \sum_{k \geq 2} m_{\sl(k,\H)}, \\
d_4(\g) &= m_{\so(7,1)} + m_{\so(5,3)} + 2m_{\so(8,\C)} \\
&\qquad + \sum_{k \geq 4} m_{\sl(k,\C)} + \sum_{k \geq 7, \, k \neq 8} m_{\so(k,\C)} + \sum_{k \geq 2} m_{\sp(k,\C)}.
\end{align*}
\begin{rem}
When we compute $d_k(\g)$,
we must be careful in the following accidental isomorphisms:
\begin{itemize}
\item $d_1(\g)$:
$\so(2,\C) \simeq \C = \R \oplus \sqrt{-1}\R$,
$\so(1,1) \simeq \R$.
\item $d_2(\g)$:
$\so(4,\C) \simeq \sl(2,\C) \oplus \sl(2,\C)$,
$\so(3,1) \simeq \sl(2,\C)$,
\item $d_3(\g)$:
$\so(6,\C) \simeq \sl(4,\C)$,
$\so(3,3) \simeq \sl(4,\R)$,
$\so(5,1) \simeq \sl(2,\H)$.
\item $d_4(\g)$:
$\so(5,\C) \simeq \sp(2,\C)$,
$\so(6,\C) \simeq \sl(4,\C)$.
\end{itemize}
\end{rem}
\begin{prop}\label{prop:d_k}
Let $(\g, \h)$ be a reductive pair with Cartan involution $\theta$.
If $d_k(\g) < d_k(\h)$ for some $1 \leq k \leq 4$,
the restriction map
$\rest : (P_{\g^\ast})^{-\theta} \to (P_{\h^\ast})^{-\theta}$
is not surjective.
\end{prop}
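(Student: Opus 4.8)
The plan is to reduce the statement to a degree-by-degree comparison of dimensions, using the fact that the restriction map $\rest : (P_{\g^\ast})^{-\theta} \to (P_{\h^\ast})^{-\theta}$ respects the grading. Since both $(P_{\g^\ast})^{-\theta}$ and $(P_{\h^\ast})^{-\theta}$ are graded vector spaces (concentrated in odd degrees, coming from primitive elements), the restriction map decomposes as a direct sum of linear maps $\rest_m : (P_{\g^\ast}^m)^{-\theta} \to (P_{\h^\ast}^m)^{-\theta}$ in each degree $m$. Hence, if $\dim (P_{\h^\ast}^m)^{-\theta} > \dim (P_{\g^\ast}^m)^{-\theta}$ for even a single degree $m$, the map $\rest_m$ cannot be surjective, so $\rest$ is not surjective. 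Thus it suffices to exhibit, for each $k$ with $1 \leq k \leq 4$, a specific degree $m = m(k)$ such that the hypothesis $d_k(\g) < d_k(\h)$ forces $\dim (P_{\g^\ast}^{m})^{-\theta} < \dim (P_{\h^\ast}^{m})^{-\theta}$.

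First I would identify the right target degree for each $k$. The natural guess, reading off Table~\ref{table:degree-anti} and Proposition~\ref{prop:degree}, is $m(1) = 1$, $m(2) = 3$, $m(3) = 5$, and $m(4) = 7$: degree $1$ detects split-abelian factors $\R$; degree $3$ detects complex simple factors (every complex simple $\l$ contributes a degree-$3$ class since $3$ is always an exponent, but no real simple factor with $\rank \ne \rank \k$ does, as the lowest nonzero degree in the real cases of the table is $5$); degree $5$ additionally picks up the $\sl(k,\R)$, $\sl(k,\C)$ ($k\geq 3$), and $\sl(k,\H)$ factors (these have $5$ in their degree list, while complex simple factors of types other than $A$ jump from $3$ to $7$); and degree $7$ picks up the remaining factors appearing in $d_4$. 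Concretely I would prove, via the additivity $\dim (P_{\g^\ast}^m)^{-\theta} = \sum (\text{contributions of the simple and abelian summands})$ together with Proposition~\ref{prop:degree} and the degree tables, the four identities
\[
\dim (P_{\g^\ast}^1)^{-\theta} = d_1(\g), \quad
\dim (P_{\g^\ast}^3)^{-\theta} = d_2(\g),
\]
\[
\dim (P_{\g^\ast}^5)^{-\theta} = d_2(\g) + d_3(\g), \quad
\dim (P_{\g^\ast}^7)^{-\theta} = d_2(\g) + d_3'(\g) + d_4(\g)
\]
for suitable corrections $d_3'$ (and with analogous formulas for $\h$), or more simply the weaker statement that $d_k(\g) < d_k(\h)$ implies strict inequality of dimensions in degree $m(k)$. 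This is a bookkeeping exercise: decompose $\g$ and $\h$ into simple and abelian summands, recall that for a direct sum $\dim (P^m)^{-\theta}$ adds over the summands, and read the multiplicities of the relevant degree off the tables, being careful with the accidental isomorphisms flagged in the remark preceding the proposition.

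The main obstacle is the case $k = 4$, degree $7$. Here the definition of $d_4$ includes the oddball terms $m_{\so(7,1)} + m_{\so(5,3)} + 2 m_{\so(8,\C)}$, the factor of $2$ for $\so(8,\C)$ reflecting triality (the degree list of $\so(8,\C)$ is $3,7,7,4$ after the $D_n$ Pfaffian substitution, so it contributes \emph{two} independent degree-$7$ exponents rather than one), and the exclusion $k \neq 8$ in $\sum_{k \geq 7} m_{\so(k,\C)}$ together with the compensating $2m_{\so(8,\C)}$ term. I will need to verify carefully that $\so(7,1)$ and $\so(5,3)$ — both real forms of $\so(8,\C)$ with $\rank \ne \rank \k$ — contribute exactly one degree-$7$ class to $(P_{\g^\ast})^{-\theta}$ (this requires going back to Fact~\ref{fact:GHV-X7-1} and checking which of the invariants $f_2, f_4, f_6, \tilde f$ restrict to zero on $\k_\C = \so(7,\C)$ resp. $\so(5,\C)\oplus\so(3,\C)$), and that no other real form of a simple Lie algebra contributes in degree $7$. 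Once these degree-$7$ contributions are pinned down, the inequality $d_4(\g) < d_4(\h) \Rightarrow \dim(P_{\g^\ast}^7)^{-\theta} < \dim(P_{\h^\ast}^7)^{-\theta}$ — more precisely, that the degree-$7$ dimension is a sum over summands in which every term counted by $d_4$ appears with multiplicity at least its multiplicity in $d_4$ and every summand not in $d_4$ contributes the same amount to $\g$-side bookkeeping and $\h$-side bookkeeping in a controlled way — follows, and then $\rest_7$ fails to be surjective. Assembling the four cases and invoking the grading-preservation of $\rest$ completes the proof.
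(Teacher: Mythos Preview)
Your overall strategy is exactly the paper's: since $\rest$ preserves the grading, it suffices to find a single degree $m$ where $\dim (P_{\g^\ast}^{m})^{-\theta} < \dim (P_{\h^\ast}^{m})^{-\theta}$, and the right degree is $m = 2k-1$. But your dimension formulas in degrees $5$ and $7$ are wrong, and the error is not cosmetic.

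You claim $\dim (P_{\g^\ast}^{5})^{-\theta} = d_2(\g) + d_3(\g)$. This is false: the complex simple factors of types $B$, $C$, $D$, $E$, $F$, $G$ (all counted in $d_2$) contribute \emph{nothing} in degree $5$, as you yourself observe (``they jump from $3$ to $7$''). The only degree-$5$ contributions come from $\sl(k,\R)$ ($k\geq 3$), $\sl(k,\H)$ ($k\geq 2$), and $\sl(k,\C)$ ($k\geq 3$), so in fact
\[
\dim (P_{\g^\ast}^{5})^{-\theta} = d_3(\g),
\]
exactly. Likewise, a careful reading of Table~\ref{table:degree-anti} (with the accidental isomorphisms) gives
\[
\dim (P_{\g^\ast}^{2k-1})^{-\theta} = d_k(\g) \quad\text{for each } k = 1,2,3,4,
\]
with no correction terms; the quantities $d_k$ were \emph{defined} to make this identity hold. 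For $k=4$ this is precisely the bookkeeping you outline: $\so(8,\C)$ contributes two degree-$7$ classes (exponents $3,7,11,7$), $\so(7,1)$ and $\so(5,3)$ each contribute one (their unique class lives in degree $p+q-1 = 7$), and no $\sl(k,\R)$, $\sl(k,\H)$, or exceptional complex algebra contributes in degree $7$.

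The reason this matters: with your formula $d_2+d_3$, the hypothesis $d_3(\g) < d_3(\h)$ would \emph{not} imply $\dim(P_{\g^\ast}^5)^{-\theta} < \dim(P_{\h^\ast}^5)^{-\theta}$, since $d_2(\g) - d_2(\h)$ could compensate. Your ``weaker statement'' would then be unprovable. Once you correct the formulas to $\dim (P_{\g^\ast}^{2k-1})^{-\theta} = d_k(\g)$, the proof collapses to a single line, which is exactly what the paper does.
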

\begin{proof}
It is enough to check that
$\rest :
(P_{\g^\ast}^{2k-1})^{-\theta} \to (P_{\h^\ast}^{2k-1})^{-\theta}$
is not injective.
By Proposition~\ref{prop:degree}, we have
\[ \dim (P_{\g^\ast}^{2k-1})^{-\theta} =  d_k(\g) <
d_k(\h) = \dim (P_{\h^\ast}^{2k-1})^{-\theta}. \qedhere \]
\end{proof}

\begin{cor}\label{cor:d_k}
Let $G/H$ be a homogeneous space of reductive type.
If $d_k(\g) < d_k(\h)$ for some $1 \leq k \leq 4$, then
there does not exists a compact manifold locally modelled on $G/H$.
\end{cor}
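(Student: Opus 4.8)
The plan is to deduce Corollary~\ref{cor:d_k} immediately by chaining together three results already available to us: Fact~\ref{fact:previous} (the cohomological obstruction), Fact~\ref{fact:CO} (the translation of injectivity of $i$ into surjectivity of $\rest : (P_{\g^\ast})^{-\theta} \to (P_{\h^\ast})^{-\theta}$), and Proposition~\ref{prop:d_k} (the failure of that surjectivity when some $d_k(\g) < d_k(\h)$). There is essentially no computation to do; the content is entirely in lining up the hypotheses.

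First I would observe that since $G/H$ is a homogeneous space of reductive type, the underlying pair $(\g, \h)$ is a reductive pair with Cartan involution $\theta$, and $K_H = H^\theta$ is a maximal compact subgroup of $H$ with Lie algebra $\k_H = \h^\theta$, so Facts~\ref{fact:previous} and \ref{fact:CO} and Proposition~\ref{prop:d_k} all apply to this $(\g, \h, \theta)$. Next, assuming $d_k(\g) < d_k(\h)$ for some $1 \leq k \leq 4$, Proposition~\ref{prop:d_k} tells us that $\rest : (P_{\g^\ast})^{-\theta} \to (P_{\h^\ast})^{-\theta}$ is not surjective. By Fact~\ref{fact:CO}, the non-surjectivity of this restriction map is equivalent to the non-injectivity of the homomorphism $i : H^\bl(\g, \h; \R) \to H^\bl(\g, \k_H; \R)$. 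Finally, Fact~\ref{fact:previous} says precisely that if this $i$ is not injective, then there does not exist a compact manifold locally modelled on $G/H$, which is the desired conclusion.

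The only mild subtlety worth flagging is the compatibility of conventions: Fact~\ref{fact:previous} is phrased for homogeneous spaces of reductive type with $K_H$ a maximal compact subgroup of $H$, while Fact~\ref{fact:CO} and Proposition~\ref{prop:d_k} are phrased at the Lie-algebra level for reductive pairs. One should note that these are the same data — the passage from $G/H$ to $(\g, \h)$ and from $K_H$ to $\k_H = \h^\theta$ is exactly the correspondence set up in the preliminaries — so the three statements concatenate without friction. I do not expect any genuine obstacle here: the corollary is a formal consequence, recorded for convenient reference in the general (not necessarily symmetric) reductive setting, and the proof is a two-line citation chain. A complete write-up would simply read: ``Combine Fact~\ref{fact:previous}, Fact~\ref{fact:CO} and Proposition~\ref{prop:d_k}.''
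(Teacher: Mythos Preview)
Your proposal is correct and matches the paper's proof exactly: the paper's entire argument is ``Combine Facts~\ref{fact:previous}, \ref{fact:CO} and Proposition~\ref{prop:d_k}.'' Your remarks on the compatibility of the Lie-group and Lie-algebra formulations are accurate and, as you anticipate, unnecessary for the final write-up.
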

\begin{proof}
Combine Facts~\ref{fact:previous}, \ref{fact:CO}
and Proposition~\ref{prop:d_k}.
\end{proof}

\subsection{The $c$-dual of a semisimple symmetric pair}

Let $(\g, \h)$ be a semisimple symmetric pair defined by an
involution $\sigma$ (i.e.\ $\h = \g^\sigma$).
Put $\q = \g^{-\sigma}$ and
\[
\g^c = \h \oplus \sqrt{-1} \q \subset \g_\C,
\]
where $\g_\C$ is the complexification of $\g$.
Then $(\g^c, \h)$ becomes a semisimple symmetric pair.
It is called the $c$-dual of the semisimple symmetric pair
$(\g, \h)$.
If $\theta$ is a Cartan involution of $\g$
that commutes with $\sigma$,
one can define a Cartan involution $\theta^c$ of $\g^c$ by
\[ \theta^c|_{\h} = \theta, \qquad
\theta^c|_{\sqrt{-1} \q} = -\theta. \]

Note that $\g^{cc} = \g$, and
the $c$-dual of an irreducible symmetric pair is again irreducible.

\begin{prop}\label{prop:c-dual}
Let $(\g, \h)$ be a semisimple symmetric pair and
$(\g^c, \h)$ its $c$-dual.
Then, the restriction map
$\rest: (P_{\g^\ast})^{-\theta} \to (P_{\h^\ast})^{-\theta}$
is surjective if and only if
$\rest:
(P_{(\g^c)^\ast})^{-\theta^c} \to (P_{\h^\ast})^{-\theta^c}$
is surjective.
\end{prop}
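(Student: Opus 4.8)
The plan is to pass to the common complexification $\g_\C=(\g^c)_\C$ and to establish two facts: that the images of the two source spaces under the restriction map $\rest\colon P_{\g_\C^\ast}\to P_{\h_\C^\ast}$ coincide, and that the two target spaces coincide. Given these, both surjectivity assertions become equivalent to the common image being equal to the common target.

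First I would reduce to the complexification: a real linear map is surjective if and only if its complexification is, and there are natural identifications $(P_{\g^\ast})^{-\theta}\ox\C\simeq(P_{\g_\C^\ast})^{-\theta_\C}$ and $(P_{\h^\ast})^{-\theta}\ox\C\simeq(P_{\h_\C^\ast})^{-\theta_\C}$, with the analogous ones for the $c$-dual (here $(\g^c)_\C=\g_\C$ while $\h$ is common to both pairs). Next, writing $\sigma_\C$ for the complexification of $\sigma$, I would check on the four joint eigenspaces of the commuting involutions $(\sigma_\C,\theta_\C)$ of $\g_\C$ that $\theta^c_\C=\sigma_\C\theta_\C=\theta_\C\sigma_\C$; this is a direct computation from the definitions $\theta^c|_\h=\theta$ and $\theta^c|_{\sqrt{-1}\q}=-\theta$ together with $(\g^c)_\C=\h_\C\oplus\q_\C=\g_\C$. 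Since $\h=\g^\sigma$, the automorphism $\sigma_\C$ restricts to the identity on $\h_\C$; hence $\theta^c_\C$ and $\theta_\C$ agree on $\h_\C$, so the two target spaces literally coincide, $(P_{\h_\C^\ast})^{-\theta_\C}=(P_{\h_\C^\ast})^{-\theta^c_\C}=:T$. Moreover $\sigma_\C$ acts trivially on $(\Lambda\h_\C^\ast)^{\h_\C}$, hence on $P_{\h_\C^\ast}$, and the restriction map is equivariant for automorphisms of $\g_\C$ preserving $\h_\C$; applying this to $\sigma_\C$ gives $\rest\circ\sigma_\C^\ast=\rest$ on $P_{\g_\C^\ast}$.

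Now let $s=\sigma_\C^\ast$ and $t=\theta_\C^\ast$ be the induced commuting involutions of $P_{\g_\C^\ast}$, and decompose $P_{\g_\C^\ast}=\bigoplus_{\epsilon,\delta\in\{+,-\}}P^{\epsilon\delta}$ into joint eigenspaces. Then $(P_{\g_\C^\ast})^{-\theta_\C}=P^{+-}\oplus P^{--}$, whereas $(P_{\g_\C^\ast})^{-\theta^c_\C}$ is the $(-1)$-eigenspace of the composite $st$, namely $P^{+-}\oplus P^{-+}$. From $\rest\circ s=\rest$ it follows that $\rest$ annihilates the $(-1)$-eigenspace $P^{-+}\oplus P^{--}$ of $s$. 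Hence $\rest\bigl((P_{\g_\C^\ast})^{-\theta_\C}\bigr)=\rest(P^{+-})=\rest\bigl((P_{\g_\C^\ast})^{-\theta^c_\C}\bigr)$. Therefore $\rest\colon(P_{\g_\C^\ast})^{-\theta_\C}\to T$ is surjective precisely when $\rest(P^{+-})=T$, which is precisely when $\rest\colon(P_{\g_\C^\ast})^{-\theta^c_\C}\to T$ is surjective; translating back through the complexification yields the proposition.

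The only mildly delicate point is the bookkeeping in the second paragraph, namely the identification $(\g^c)_\C\cong\g_\C$ that produces $\theta^c_\C=\sigma_\C\theta_\C$; everything afterwards is formal manipulation of the joint eigenspace decomposition, using nothing about $\sigma_\C$ beyond its being an automorphism of $\g_\C$ that fixes $\h_\C$ pointwise.
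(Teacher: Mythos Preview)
Your argument is correct. Both you and the paper complexify and use that $(\g^c)_\C=\g_\C$ together with $\theta^c|_\h=\theta|_\h$, so the target $T=(P_{\h_\C^\ast})^{-\theta_\C}=(P_{\h_\C^\ast})^{-\theta^c_\C}$ is common. The difference lies in how the source side is handled. You introduce $\sigma_\C$, verify $\theta^c_\C=\sigma_\C\theta_\C$, split $P_{\g_\C^\ast}$ into the four joint $(\sigma_\C,\theta_\C)$-eigenspaces, and use $\rest\circ\sigma_\C^\ast=\rest$ to see that $\rest$ kills the pieces on which the two source spaces differ, so both have image $\rest(P^{+-})$. The paper instead observes that, because $\rest:P_{\g_\C^\ast}\to P_{\h_\C^\ast}$ is $\theta_\C$-equivariant (and likewise $\theta^c_\C$-equivariant), surjectivity of the restricted map onto $T$ is equivalent to the image of the \emph{full} restriction $\rest:P_{\g_\C^\ast}\to P_{\h_\C^\ast}$ containing $T$; since that full map depends only on $\g_\C$ and $\h_\C$, which are common to both pairs, the equivalence follows without ever invoking $\sigma$. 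The paper's route is a line or two shorter; yours makes the mechanism more transparent by isolating exactly which summand $P^{+-}$ carries all of the information.
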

\begin{proof}
We use the same symbols $\theta$ and $\theta^c$
for their complex linear extensions to $\g_\C$.
The restriction map
$\rest: (P_{\g^\ast})^{-\theta} \to (P_{\h^\ast})^{-\theta}$
is surjective if and only if
\[ \im (\rest: P_{\g^\ast} \to P_{\h^\ast}) \subset
(P_{\h^\ast})^{-\theta}, \]
or equivalently,
\[ \im (\rest: P_{\g_\C^\ast} \to P_{\h_\C^\ast}) \subset
(P_{\h^\ast_\C})^{-\theta}. \]
Similarly,
$\rest: (P_{(\g^c)^\ast})^{-\theta^c} \to
(P_{\h^\ast})^{-\theta^c}$
is surjective if and only if
\[ \im (\rest: P_{\g_\C^\ast} \to P_{\h_\C^\ast}) \subset
(P_{\h^\ast_\C})^{-\theta^c}. \]
Since $\theta = \theta^c$ on $\h_\C$,
these two conditions are equivalent.
\end{proof}

\subsection{Complex reductive pairs}

A reductive pair $(\g, \h)$ is called complex if $\g$
is a complex Lie algebra and $\h$ is a complex subalgebra of $\g$.
We remark that the Cartan involution $\theta$ of $\g$ is
complex antilinear in this case.
We use the notation $\g_\R$ (resp.\ $\h_\R$) when we regard $\g$
(resp.\ $\h$) as a real Lie algebra.
\begin{prop}\label{prop:complex}
Let $(\g, \h)$ be a complex reductive pair with Cartan involution
$\theta$. Then, the following three conditions are equiavlent:
\begin{enumerate}
\item[\textup{(1)}] The (real) linear map
$\rest: (P_{{\g_\R}^\ast})^{-\theta} \to
(P_{{\h_\R}^\ast})^{-\theta}$
is surjective.
\item[\textup{(2)}] The (complex) linear map
$\rest: P_{\g^\ast} \to P_{\h^\ast}$ is surjective.
\item[\textup{(3)}] The (complex) graded algebra homomorphism
$\rest: (S \g^\ast)^\g \to (S \h^\ast)^\h$ is surjective.
\end{enumerate}
\end{prop}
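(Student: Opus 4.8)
The plan is to prove the implications $(3)\Rightarrow(2)\Rightarrow(1)$ and then close the loop with $(1)\Rightarrow(3)$, using throughout the commuting square relating the Cartan map $\overline{\rho}$ to the restriction map that was recorded in \S2.2, together with the structural facts about $(P_{\g^\ast})^{-\theta}$ in \S2.4.

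First I would observe that for a complex reductive Lie algebra $\g$, viewed as a real Lie algebra $\g_\R$, one has $(\g_\R)_\C \simeq \g \oplus \bar\g$, and the Cartan involution $\theta$ of $\g$ (which is complex antilinear) corresponds under this isomorphism to the flip composed with the complex structure data; concretely, as in the proof of Proposition~\ref{prop:degree} for complex simple $\g$, one gets $P_{(\g_\R)_\C^\ast} \simeq P_{\g^\ast} \oplus P_{\g^\ast}$ with $\theta$ acting by $(\alpha_1,\alpha_2)\mapsto(\alpha_2,\alpha_1)$, so that $(P_{(\g_\R)_\C^\ast})^{-\theta} \simeq P_{\g^\ast}$ via $\alpha \mapsto (\alpha,-\alpha)$ (and likewise for $\h$). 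Under these identifications the real restriction map $\rest:(P_{{\g_\R}^\ast})^{-\theta}\to(P_{{\h_\R}^\ast})^{-\theta}$ becomes, after complexification, exactly the complex restriction map $\rest:P_{\g^\ast}\to P_{\h^\ast}$. This gives $(1)\Leftrightarrow(2)$ at once, since a real linear map is surjective iff its complexification is. The equivalence $(2)\Leftrightarrow(3)$ is then immediate from the commuting square in \S2.2: $\overline{\rho_\g}$ and $\overline{\rho_\h}$ are isomorphisms identifying $P_{\g^\ast}$ with $(S^+\g^\ast)^\g/((S^+\g^\ast)^\g\cdot(S^+\g^\ast)^\g)$ compatibly with $\overline{\rest}$, so $\rest:P_{\g^\ast}\to P_{\h^\ast}$ is surjective iff $\overline{\rest}$ on the indecomposables is surjective; and by the graded Nakayama lemma, a graded algebra homomorphism $(S\g^\ast)^\g\to(S\h^\ast)^\h$ between finitely generated graded polynomial algebras is surjective iff it is surjective on indecomposables, i.e.\ iff $\overline{\rest}$ is surjective.

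The step I expect to require the most care is pinning down precisely how the complex antilinear $\theta$ on $\g$ translates, under $(\g_\R)_\C \simeq \g\oplus\bar\g$, into the flip on $P_{\g^\ast}\oplus P_{\g^\ast}$, and checking that the diagonal-versus-antidiagonal bookkeeping makes the real restriction map match the complex one rather than, say, its conjugate or its square. This is essentially the same computation already carried out in Proposition~\ref{prop:degree} for the complex simple case, so I would lean on that: there it is shown that $\dim(P_{(\g_\R)_\C^\ast}^k)^{-\theta}=\dim P_{\g^\ast}^k$ via the flip description, and the identification of the maps is functorial in the pair $(\g,\h)$. Everything else—the Nakayama argument and the use of the commuting square—is routine and purely formal, so the proposition should follow cleanly once this translation is stated carefully.
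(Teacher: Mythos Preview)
Your proposal is correct and follows essentially the same route as the paper's proof: the equivalence $(1)\Leftrightarrow(2)$ via the flip description of $\theta$ on $P_{(\g_\R)_\C^\ast}\simeq P_{\g^\ast}\oplus P_{\g^\ast}$ (borrowed from the proof of Proposition~\ref{prop:degree}), and the equivalence $(2)\Leftrightarrow(3)$ via the Cartan-map square together with the fact that $(S\h^\ast)^\h$ is a polynomial algebra. The only cosmetic difference is that where you invoke the graded Nakayama lemma, the paper simply cites Fact~\ref{fact:CST} (Chevalley--Shephard--Todd); these amount to the same observation that a graded algebra map to a polynomial algebra is surjective iff it is surjective on indecomposables.
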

\begin{proof}
(1) $\Leftrightarrow$ (2).
As we saw in the proof of Proposition~\ref{prop:degree},
the Cartan involution $\theta$ acts on
$P_{{\g_\R}^\ast} \ox \C \simeq P_{\g^\ast} \oplus P_{\g^\ast}$
by $(\alpha_1, \alpha_2) \mapsto (\alpha_2, \alpha_1)$
($\alpha_1, \alpha_2 \in P_{\g^\ast}$).
Thus
$(P_{{\g_\R}^\ast})^{-\theta} \ox \C \simeq
\{ (\alpha, -\alpha) : \alpha \in P_{\g^\ast} \}$,
and similarly
$(P_{{\h_\R}^\ast})^{-\theta} \ox \C \simeq
\{ (\beta, -\beta) : \beta \in P_{\g^\ast} \}$.
By these isomorphisms, the restriction map
$\rest:
(P_{{\g_\R}^\ast})^{-\theta} \to (P_{{\h_\R}^\ast})^{-\theta}$
is rewritten as
$(\alpha, -\alpha) \mapsto (\alpha|_\h, -\alpha|_\h)$
($\alpha \in P_{\g^\ast}$).
This is surjective if and only if so is
$\rest: P_{\g^\ast} \to P_{\h^\ast}$.

(2) $\Leftrightarrow$ (3).
Recall that the restriction map
$\rest: P_{{\g}^\ast} \to P_{{\h}^\ast}$
is canonically identified with
\[
\overline{\rest}:
(S^+ \g^\ast)^\g/((S^+ \g^\ast)^\g \cdot (S^+ \g^\ast)^\g) \to
(S^+ \h^\ast)^\h/((S^+ \h^\ast)^\h \cdot (S^+ \h^\ast)^\h)
\]
via the Cartan maps.
By Fact~\ref{fact:CST},
the above linear map is surjective if and only if
$\rest: (S \g^\ast)^\g \to (S \h^\ast)^\h$ is surjective.
\end{proof}

\begin{cor}\label{cor:complex}
Let $G/H$ be a complex homogeneous space of reductive type
(i.e.\ a homogeneous space of reductive type
such that the corresponding reductive pair $(\g, \h)$ is complex).
If the restriction map $\rest : P_{\g^\ast} \to P_{\h^\ast}$
is not surjective or, equivalently,
$\rest : (S \g^\ast)^\g \to (S \h^\ast)^\h$ is not surjective,
then there does not exists a compact manifold locally modelled on
$G/H$.
\end{cor}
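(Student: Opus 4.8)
The plan is to obtain Corollary~\ref{cor:complex} as a formal consequence of the three results already established, namely Facts~\ref{fact:previous} and \ref{fact:CO} together with Proposition~\ref{prop:complex}. First I would observe that, since $(\g, \h)$ is a complex reductive pair, the pair $(\g_\R, \h_\R)$ is a reductive pair in the sense of Section~2.1 and $G/H$ is in particular a homogeneous space of reductive type. Hence Fact~\ref{fact:previous} applies: a compact manifold locally modelled on $G/H$ can exist only if the homomorphism $i : H^\bl(\g_\R, \h_\R; \R) \to H^\bl(\g_\R, \k_H; \R)$ is injective. So it suffices to prove that, under the stated hypothesis, this $i$ is \emph{not} injective.

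Next I would apply Fact~\ref{fact:CO} to the reductive pair $(\g_\R, \h_\R)$ with Cartan involution $\theta$ (which is complex antilinear, but is nonetheless a Cartan involution of the underlying real Lie algebra): injectivity of $i$ is equivalent to surjectivity of the restriction map $\rest : (P_{{\g_\R}^\ast})^{-\theta} \to (P_{{\h_\R}^\ast})^{-\theta}$. Thus it is now enough to show that this restriction map fails to be surjective.

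Finally I would invoke Proposition~\ref{prop:complex}, which is tailored precisely to the complex reductive case: surjectivity of $\rest : (P_{{\g_\R}^\ast})^{-\theta} \to (P_{{\h_\R}^\ast})^{-\theta}$ is equivalent to surjectivity of the complex linear map $\rest : P_{\g^\ast} \to P_{\h^\ast}$, which in turn is equivalent to surjectivity of the graded algebra homomorphism $\rest : (S\g^\ast)^\g \to (S\h^\ast)^\h$ (the equivalence of these last two is exactly the ``equivalently'' clause in the statement, i.e.\ (2)$\Leftrightarrow$(3) of Proposition~\ref{prop:complex}). By hypothesis one---hence each---of these three maps is not surjective, so $\rest : (P_{{\g_\R}^\ast})^{-\theta} \to (P_{{\h_\R}^\ast})^{-\theta}$ is not surjective, $i$ is not injective, and the desired nonexistence follows from the first step.

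I expect no genuine obstacle in this argument: the corollary is purely a matter of assembling the biconditional reductions supplied by Facts~\ref{fact:previous} and \ref{fact:CO} and by Proposition~\ref{prop:complex}. The only point that requires (minimal) care is the remark that a complex homogeneous space of reductive type is a homogeneous space of reductive type once $G$, $H$, $\g$ and $\h$ are regarded in the real sense, so that the earlier results genuinely apply; this is immediate from the definitions. (One could equally phrase the conclusion, in parallel with Corollary~\ref{cor:d_k}, without reference to $\theta$ at all, precisely because of the $\theta$-free reformulation provided by Proposition~\ref{prop:complex}(3).)
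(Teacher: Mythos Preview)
Your proposal is correct and follows exactly the paper's own approach: the paper's proof consists of the single sentence ``Combine Facts~\ref{fact:previous}, \ref{fact:CO} and Proposition~\ref{prop:complex},'' and your argument simply spells out that combination in detail.
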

\begin{proof}
Combine Facts~\ref{fact:previous}, \ref{fact:CO}
and Proposition~\ref{prop:complex}.
\end{proof}

\subsection{Complexifications}

\begin{prop}\label{prop:real_form}
Let $(\g_0, \h_0)$ be a reductive pair with Cartan involution
$\theta_0$. Let $\g$ and $\h$ be the complexifications of
$\g_0$ and $\h_0$, respectively.
We denote by $\theta$ the Cartan involution of $\g$ such that
$\theta|_\g = \theta_0$.
Then, the restriction map
$\rest: (P_{\g_\R^\ast})^{-\theta} \to (P_{\h_\R^\ast})^{-\theta}$
is surjective only if so is
$\rest:
(P_{\g_0^\ast})^{-\theta_0} \to (P_{\h_0^\ast})^{-\theta_0}$.
\end{prop}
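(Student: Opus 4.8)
The plan is to reduce the statement to Proposition~\ref{prop:complex} together with an elementary base-change argument: since $(\g,\h)$ is a \emph{complex} reductive pair, the hypothesis turns out to be equivalent to the full surjectivity of the restriction map $P_{\g^\ast}\to P_{\h^\ast}$, and surjectivity is insensitive to complexification. To begin, recall that $\g$ and $\h$ are the complexifications of $\g_0$ and $\h_0$, so $(\g,\h)$ is a complex reductive pair; writing $\g_0=\k_0\oplus\p_0$ for the Cartan decomposition of $\theta_0$, the subalgebra $\k_0\oplus\sqrt{-1}\,\p_0$ is a compact real form of $\g$ and $\theta$ is the associated Cartan involution of $\g_\R$. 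In particular $\theta$ is complex antilinear, with $\theta(X+\sqrt{-1}\,Y)=\theta_0(X)-\sqrt{-1}\,\theta_0(Y)$ for $X,Y\in\g_0$, and $\theta(\h)=\h$ follows from $\theta_0(\h_0)=\h_0$. Thus $(\g,\h,\theta)$ satisfies the hypotheses of Proposition~\ref{prop:complex}, and by the equivalence (1) $\Leftrightarrow$ (2) there, the assumption that $\rest:(P_{\g_\R^\ast})^{-\theta}\to(P_{\h_\R^\ast})^{-\theta}$ is surjective is equivalent to the surjectivity of the complex linear map $\rest:P_{\g^\ast}\to P_{\h^\ast}$.

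Next I would descend to $\R$. Since complexification commutes with passing to invariants, with exterior powers, and with the primitivity condition, there are canonical identifications $P_{\g^\ast}\simeq P_{\g_0^\ast}\ox_\R\C$ and $P_{\h^\ast}\simeq P_{\h_0^\ast}\ox_\R\C$ (the same base change as in the proof of Proposition~\ref{prop:degree}); and since restriction to a subalgebra manifestly commutes with complexification, the complex map $\rest:P_{\g^\ast}\to P_{\h^\ast}$ is the complexification of the $\R$-linear map $\rest:P_{\g_0^\ast}\to P_{\h_0^\ast}$. An $\R$-linear map is surjective if and only if its complexification is, so the hypothesis is equivalent to the surjectivity of $\rest:P_{\g_0^\ast}\to P_{\h_0^\ast}$.

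Finally I would pass to the $(-\theta_0)$-eigenspaces. The restriction map $\rest:(\Lambda\g_0^\ast)^{\g_0}\to(\Lambda\h_0^\ast)^{\h_0}$ is $\theta_0$-equivariant (because $\theta_0(\h_0)=\h_0$), hence so is $\rest:P_{\g_0^\ast}\to P_{\h_0^\ast}$. For any surjective linear map $f:V\to W$ commuting with a pair of involutions, the eigenspace decompositions $V=V^{+}\oplus V^{-}$ and $W=W^{+}\oplus W^{-}$ into $\pm1$-eigenspaces, together with $f(V^{\pm})\subseteq W^{\pm}$, force $f(V^{-})=W^{-}$; applying this to the surjection $\rest:P_{\g_0^\ast}\to P_{\h_0^\ast}$ obtained above gives the surjectivity of $\rest:(P_{\g_0^\ast})^{-\theta_0}\to(P_{\h_0^\ast})^{-\theta_0}$, which is the assertion of the proposition.

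I do not anticipate a genuine difficulty: once Proposition~\ref{prop:complex} is available, the argument is just ``surjectivity is preserved and detected by complexification'' plus the trivial remark that a surjection commuting with an involution is surjective on the $(-1)$-eigenspaces. The only point needing care is keeping straight the three relevant involutions --- $\theta_0$ on $\g_0$, $\theta$ on $\g_\R$, and their common complex-linear extension to $(\g_\R)_\C$ --- and checking that $\theta$ is indeed a Cartan involution of $\g_\R$ satisfying $\theta(\h)=\h$. It is also worth noting why only a one-sided implication is claimed: the last step is not reversible, since surjectivity of $\rest$ on the $(-\theta_0)$-parts is in general strictly weaker than surjectivity of the full restriction map $P_{\g_0^\ast}\to P_{\h_0^\ast}$, which is what is actually equivalent to the hypothesis for the complexified pair.
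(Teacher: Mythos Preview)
Your proof is correct and follows essentially the same approach as the paper: both use Proposition~\ref{prop:complex} to convert the hypothesis into surjectivity of the complex map $\rest:P_{\g^\ast}\to P_{\h^\ast}$, then descend via base change to the real pair $(\g_0,\h_0)$. The only cosmetic difference is the order of the last two steps: the paper introduces the complex-linear extension $\theta_\C$ of $\theta_0$ to $\g$, restricts the surjection $P_{\g^\ast}\to P_{\h^\ast}$ to the $(-\theta_\C)$-eigenspaces, and identifies $(P_{\g^\ast})^{-\theta_\C}=(P_{\g_0^\ast})^{-\theta_0}\ox\C$ directly; you instead first descend to obtain surjectivity of the full real map $\rest:P_{\g_0^\ast}\to P_{\h_0^\ast}$ and then restrict to the $(-\theta_0)$-eigenspaces.
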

\begin{proof}
We write $\theta_\C$ for the complex linear extension of
$\theta_0$ to $\g$.
We note that $\theta_\C$ is a complex linear involution on $\g$,
whereas $\theta$ is complex antilinear. If the linear map
$\rest: (P_{\g_\R^\ast})^{-\theta} \to (P_{\h_\R^\ast})^{-\theta}$
is surjective, so is $\rest : P_{\g^\ast} \to P_{\h^\ast}$
by Proposition~\ref{prop:complex} (1)~$\Rightarrow$~(2).
In particular, it is surjective on the $(-1)$-eigenspaces for
$\theta_\C$, namely,
\[
\rest : (P_{\g_0^\ast})^{-\theta_0} \ox \C =
(P_{\g^\ast})^{-\theta_\C} \to (P_{\h^\ast})^{-\theta_\C} =
(P_{\h_0^\ast})^{-\theta_0} \ox \C
\]
is surjective.
This is equivalent to saying that
$\rest :
(P_{\g_0^\ast})^{-\theta_0} \to (P_{\h_0^\ast})^{-\theta_0}$
is surjective.
\end{proof}
\begin{cor}\label{cor:real_form}
Let $G_0/H_0$ be a homogeneous space of reductive type
with Cartan involution $\theta_0$.
Let $G/H$ be a complex homogeneous space of reductive type
such that $G$ and $H$ are the complexifications of
$G_0$ and $H_0$, respectively.
If the homomorphism
$i : H^\bl(\g_0, \h_0; \R) \to H^\bl(\g_0, \k_{H_0}; \R)$
is not injective or, equivalently,
$\rest : (P_{\g^\ast})^{-\theta} \to (P_{\h^\ast})^{-\theta}$
is not surjective, then there does not exist
a compact manifold locally modelled on $G/H$.
\end{cor}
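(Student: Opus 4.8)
The plan is to obtain Corollary~\ref{cor:real_form} as a formal consequence of the three ingredients that have been set up for exactly this purpose: Fact~\ref{fact:previous} (the cohomological obstruction to compact locally modelled manifolds), Fact~\ref{fact:CO} (its reformulation in terms of the restriction map on primitive elements), and Proposition~\ref{prop:real_form} (comparison of a reductive pair with its complexification). No new computation is required; the only thing demanding a little care is the bookkeeping between the complex Lie algebras $\g,\h$ and the underlying real Lie algebras $\g_\R,\h_\R$, and among the involutions $\theta_0$ on $\g_0$, its complex-linear extension, and the Cartan involution $\theta$ of the complex pair.

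First I would rewrite the two conditions appearing in the hypothesis in the language of restriction maps of primitive elements by applying Fact~\ref{fact:CO} twice. Applied to the reductive pair $(\g_0,\h_0)$ with Cartan involution $\theta_0$ and $\k_{H_0}=\h_0^{\theta_0}$, it says that $i:H^\bl(\g_0,\h_0;\R)\to H^\bl(\g_0,\k_{H_0};\R)$ fails to be injective exactly when $\rest:(P_{\g_0^\ast})^{-\theta_0}\to(P_{\h_0^\ast})^{-\theta_0}$ fails to be surjective. Applied to the complex (hence in particular reductive) pair $(\g_\R,\h_\R)$ with Cartan involution $\theta$ and $\k_H=\h_\R^\theta$, it says that $i:H^\bl(\g,\h;\R)\to H^\bl(\g,\k_H;\R)$ is injective exactly when $\rest:(P_{\g^\ast})^{-\theta}\to(P_{\h^\ast})^{-\theta}$ is surjective, where $(P_{\g^\ast})^{-\theta}$ is read as $(P_{\g_\R^\ast})^{-\theta}$.

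The main step is then the bridge between these two restriction maps, which is precisely Proposition~\ref{prop:real_form}: surjectivity of $\rest:(P_{\g_\R^\ast})^{-\theta}\to(P_{\h_\R^\ast})^{-\theta}$ forces surjectivity of $\rest:(P_{\g_0^\ast})^{-\theta_0}\to(P_{\h_0^\ast})^{-\theta_0}$. So, assuming that $i:H^\bl(\g_0,\h_0;\R)\to H^\bl(\g_0,\k_{H_0};\R)$ is not injective, the second paragraph gives that $\rest:(P_{\g_0^\ast})^{-\theta_0}\to(P_{\h_0^\ast})^{-\theta_0}$ is not surjective, the contrapositive of Proposition~\ref{prop:real_form} gives that $\rest:(P_{\g_\R^\ast})^{-\theta}\to(P_{\h_\R^\ast})^{-\theta}$ is not surjective, Fact~\ref{fact:CO} (for $(\g_\R,\h_\R)$) gives that $i:H^\bl(\g,\h;\R)\to H^\bl(\g,\k_H;\R)$ is not injective, and finally Fact~\ref{fact:previous}, applied to the complex homogeneous space $G/H$ of reductive type, yields that there exists no compact manifold locally modelled on $G/H$. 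The equivalence asserted in the hypothesis (``or, equivalently'') is the instance of Fact~\ref{fact:CO} for $(\g_0,\h_0)$, read after complexifying the relevant restriction map.

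I do not anticipate a genuine obstacle: this is a purely formal corollary of results already in hand. The one subtlety worth making explicit is that the Cartan involution $\theta$ of the complex Lie algebra $\g$ is complex antilinear, whereas the complex-linear extension $\theta_\C$ of $\theta_0$ is a different involution of $\g$, so $(P_{\g_\R^\ast})^{-\theta}$ and $(P_{\g_0^\ast})^{-\theta_0}\ox\C=(P_{\g^\ast})^{-\theta_\C}$ are a priori different subspaces of $P_{\g_\R^\ast}\ox\C$; reconciling them is exactly what the proof of Proposition~\ref{prop:real_form} does (via Proposition~\ref{prop:complex}), so here it is enough to cite that proposition.
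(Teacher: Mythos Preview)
Your proposal is correct and follows exactly the paper's approach: the paper's proof is the single line ``Combine Facts~\ref{fact:previous}, \ref{fact:CO} and Proposition~\ref{prop:real_form},'' and you have simply unpacked that combination in detail, with the appropriate care about the distinction between $\theta$, $\theta_0$, and $\theta_\C$. One small point: your justification of the parenthetical ``or, equivalently'' is a bit loose---Fact~\ref{fact:CO} for $(\g_0,\h_0)$ alone does not give the equivalence with the condition on the complexified pair, and Proposition~\ref{prop:real_form} supplies only one implication---but this concerns the phrasing of the statement rather than the proof of the conclusion, and the paper's own one-line proof does not address it either.
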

\begin{proof}
Combine Facts~\ref{fact:previous}, \ref{fact:CO}
and Proposition~\ref{prop:real_form}.
\end{proof}

\section{Proof of Theorem~\ref{thm:classify}}\label{sect:proof}

Notice that a direct sum of two semisimple symmetric spaces
$(\g_1, \h_1)$ and $(\g_2, \h_2)$ satisfies the condition (A)
(resp.\ (B)) if and only if both $(\g_1, \h_1)$ and $(\g_2, \h_2)$
satisfy (A) (resp.\ (B)).
Recall from Fact~\ref{fact:CO}
that the injectivity of the homomorphism
$i : H^\bl(\g, \h; \R) \to H^\bl(\g, \k_H; \R)$
is equivalent to the surjectivity of the linear map
$\rest: (P_{\g^\ast})^{-\theta} \to (P_{\h^\ast})^{-\theta}$.
Therefore, it is sufficient to prove the following two claims:
\begin{itemize}
\item If an irreducible symmetric pair $(\g, \h)$
is listed in (B-1)--(B-5), the restriction map
$\rest: (P_{\g^\ast})^{-\theta} \to (P_{\h^\ast})^{-\theta}$
is surjective.
\item If an irreducible symmetric pair $(\g, \h)$
is listed in Table~\ref{table:classify}, the restriction map
$\rest: (P_{\g^\ast})^{-\theta} \to (P_{\h^\ast})^{-\theta}$
is not surjective.
\end{itemize}

If $(\g, \h)$ satisfies (B-1) or (B-2), the surjectivity of
$\rest: (P_{\g^\ast})^{-\theta} \to (P_{\h^\ast})^{-\theta}$
is obvious.

If $(\g, \h)$ satisfies (B-3), its $c$-dual $(\g^c, \h)$
satisfies (B-2), and therefore the restriction map
$\rest: (P_{\g^\ast})^{-\theta} \to (P_{\h^\ast})^{-\theta}$
is surjective by Proposition~\ref{prop:c-dual}.

If $(\g, \h)$ satisfies (B-4), we have
$\dim (P_\h)^{-\theta} = \rank \h - \rank \k_H = 0$
by Fact~\ref{fact:GHV-X7-2}. Hence,
$\rest: (P_{\g^\ast})^{-\theta} \to (P_{\h^\ast})^{-\theta}$
is trivially surjective.

Suppose that $(\g, \h)$ satisfies (B-5).
By Proposition~\ref{prop:complex}~(3)~$\Rightarrow$~(1),
it suffices to see that
$\rest : (S \g^\ast)^\g \to (S \h^\ast)^\h$ is surjective.
If $(\g, \h) = (\sl(2n+1, \C), \so(2n+1, \C))$,
$(\sl(2n, \C), \sp(n, \C))$ or $(\so(2n, \C), \so(2n-1, \C))$,
the surjectivity easily follows from Fact~\ref{fact:f_k}.
If $(\g, \h) = (\e_{6, \C}, \f_{4, \C})$,
the surjectivity is proved in \cite[p.\ 322]{Tak62}.

Let $(\g, \h)$ be one of the irreducible symmetric pairs listed in
Table~\ref{table:classify}. If $(\g, \h)$ satisfies
$d_k(\g) < d_k(\h)$ for some $1 \leq k \leq 4$,
the restriction map
$\rest: (P_{\g^\ast})^{-\theta} \to (P_{\h^\ast})^{-\theta}$
is not surjective by Proposition~\ref{prop:d_k}.
The remaining cases are the following:
\begin{center}
\begin{longtable}{lll} \toprule
\multicolumn{1}{c}{$\g$} & \multicolumn{1}{c}{$\h$} & \multicolumn{1}{c}{Conditions} \\ \midrule
$\sl(2n, \C)$ & $\so(2n, \C)$
     & $n \geq 1$ \\ \hline
$\sl(p+q, \R)$ & $\so(p, q)$
     & $p,q \geq 1$, $p,q$: odd \\ \hline
$\su(p, q)$ & $\so(p, q)$
     & $p,q \geq 1$, $p,q$: odd \\ \hline
$\so(2n+1, \C)$ & $\so(2n, \C)$
     & $n \geq 3$ \\ \hline
   & & $p,q \geq 1$, $p,q$: odd, \\
$\so(p+r,q+s)$ & $\so(p,q) \oplus \so(r,s)$
     & $r,s \geq 0$, $(r,s) \neq (0,0)$, \\
   & & $(p,q,r,s) \neq (1,1,1,1)$ \\
\bottomrule
\caption{$(\g, \h)$ listed in Table~\ref{table:classify} to which Proposition~\ref{prop:d_k} is not applicable}
\label{table:remaining}
\end{longtable}
\end{center}

\vspace{-1.2cm}

The pair $(\sl(p+q, \R), \so(p,q))$ is the $c$-dual of
$(\su(p,q), \so(p,q))$.
The pairs $(\sl(2n, \C), \so(2n, \C))$ and
$(\so(2n+1, \C), \so(2n, \C))$ are the complexifications of
$(\su(2n-1, 1), \so(2n-1, 1))$ and $(\so(2n, 1), \so(2n-1, 1))$,
respectively.
Hence, by Propositions~\ref{prop:c-dual} and \ref{prop:real_form},
it is sufficient to verify that the restriction map
$\rest: (P_{\g^\ast})^{-\theta} \to (P_{\h^\ast})^{-\theta}$
is not surjective when $(\g, \h)$ is $(\su(p, q), \so(p, q))$
($p,q$: odd) or
$(\g, \h) = (\so(p+r,q+s), \so(p,q) \oplus \so(r,s))$
($p,q$: odd, $(r,s) \neq (0,0)$).
If $(\g, \h) = (\su(p, q), \so(p, q))$ ($p,q$: odd),
the nonsurjectivity follows from Proposition~\ref{prop:rank}.
If $(\g, \h) = (\so(p+r,q+s), \so(p,q) \oplus \so(r,s))$
($p,q$: odd, $(r,s) \neq (0,0)$),
we have
$\dim (P^{p+q-1}_{\g^\ast})^{-\theta} <
\dim (P^{p+q-1}_{\h^\ast})^{-\theta}$,
and the map
$\rest: (P_{\g^\ast})^{-\theta} \to (P_{\h^\ast})^{-\theta}$
cannot be surjective. \qed

\appendix

\section{The nonlinear case}

Let $G/H$ be a homogeneous space and $\pi : \tilde{G} \to G$
a covering map.
Let $\tilde{H} = \pi^{-1}(H)$.
Then, $\tilde{G}/\tilde{H}$ is diffeomorphic to $G/H$ via $\pi$.
In this appendix, we discuss the difference between
the existence problem of a compact Clifford--Klein form of
$\tilde{G}/\tilde{H}$ and of $G/H$
(or, more generally, the existence problem of a compact manifold
locally modelled on $\tilde{G}/\tilde{H}$ and on $G/H$).

\begin{rem}
Recall that we have assumed the cocycle condition for the
transition functions (\cite[\S2]{Mor17PRIMS}) in the definition of
a manifold locally modelled on a homogeneous space.
Without this assumption,
all the discussion in this appendix is false.
Note that Clifford--Klein forms always satisfy
the cocycle condition.
\end{rem}

Every manifold locally modelled on $\tilde{G}/\tilde{H}$
can be naturally regarded as a manifold locally modelled on $G/H$.
In particular, if there does not exist a compact manifold locally
modelled on $G/H$, neither does on $\tilde{G}/\tilde{H}$.
On the other hand, it is not necessarily possible to give a
structure of a manifold locally modelled on $\tilde{G}/\tilde{H}$
to a manifold locally modelled on $G/H$: in general,
one cannot lift transition functions from $G$ to $\tilde{G}$
so that the cocycle condition is still satisfied.
Thus, even if there exists a compact manifold locally modelled on
$G/H$, it does not imply the existence of a compact manifold
locally modelled on $\tilde{G}/\tilde{H}$.

The situation is similar for Clifford--Klein forms.
If $\tilde{\Gamma} \bs \tilde{G}/\tilde{H}$
is a Clifford--Klein form (i.e.\ if $\tilde{\Gamma}$
is a discrete subgroup of $\tilde{G}$
acting properly and freely on $G/H$),
$\pi(\tilde{\Gamma}) \bs G/H$ is also a Clifford--Klein form.
However, even if $\Gamma \bs G/H$ is a Clifford--Klein form,
$\pi^{-1}(\Gamma) \bs \tilde{G}/\tilde{H}$
is not a Clifford--Klein form except when $\tilde{G} = G$:
indeed, the $\pi^{-1}(\Gamma)$-action on $\tilde{G}/\tilde{H}$
is not free, and if $\pi$ is an infinite covering,
it is not even proper.
Note that, when $\Gamma$ is finitely generated (this is always
satisfied if the Clifford--Klein form $\Gamma \bs G/H$ is compact)
and $\tilde{G}$ is linear, one can circumvent the freeness issue
by Selberg's lemma~\cite[Lem.~8]{Sel60}. Thus, if $\tilde{G}$ is
a finite linear covering of $G$,
the existence problem of compact Clifford--Klein forms of
$\tilde{G}/\tilde{H}$ is equivalent to that of $G/H$.

Now, let us state our result. Let $G/K$ be
an irreducible Hermitian symmetric space of noncompact type
(i.e.\ $G$ a connected linear simple Lie group whose fundamental
group is infinite and $K$ its maximal compact subgroup).
By A.\ Borel's theorem \cite{Bor63},
$G/K$ admits a compact Clifford--Klein form.
In contrast, we have the following:

\begin{prop}\label{prop:nonlinear}
Let $G/K$ be an irreducible Hermitian symmetric space
of noncompact type. Let $\pi : \tilde{G} \to G$
be the universal covering map of $G$ and $\tilde{K} = \pi^{-1}(K)$.
Then, there does not exist a compact manifold locally modelled on
$\tilde{G}/\tilde{K}$. In particular, $\tilde{G}/\tilde{K}$
does not admit a compact Clifford--Klein form.
\end{prop}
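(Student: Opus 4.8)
The plan is to reduce the nonlinear statement to the linear classification already established in Theorem~\ref{thm:classify}, exploiting the fact that passing to the universal cover does not change the Lie algebra pair but only the topology of the group. First I would observe that $\tilde G/\tilde K$ is a homogeneous space of reductive type in the sense of this paper: although $\tilde G$ is not linear (its fundamental group, i.e.\ the center of $\tilde G$, is infinite), $\tilde K$ has infinitely many connected components, and what matters for Fact~\ref{fact:previous} is the pair of Lie algebras $(\g, \k)$ together with a maximal compact subalgebra of $\k$. Since $K$ is itself maximal compact in $G$, we have $\k_K = \k$, so the relevant homomorphism is $i : H^\bl(\g, \k; \R) \to H^\bl(\g, \k; \R)$, which is the identity and hence trivially injective --- so Fact~\ref{fact:previous} says nothing directly. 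The point is rather that $\tilde K$ is \emph{not} compact, so a compact manifold locally modelled on $\tilde G/\tilde K$ is genuinely a different object from one locally modelled on $G/K$, and the obstruction must come from elsewhere.

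The key structural input is that $G/K$ is Hermitian symmetric: the center $\z(\k)$ of $\k$ is one-dimensional, spanned by an element $Z_0$ whose adjoint action gives the complex structure. Writing $\tilde K = \pi^{-1}(K)$, the identity component $\tilde K_0$ is a finite cover of $K$, but the full group $\tilde K$ has component group equal to $\pi_1(G) \cong \Z$ (since $K$ is connected and $G/K$ is contractible, $\pi_1(G) = \pi_1(K)$, and the Hermitian condition forces this to contain a $\Z$ coming from the circle factor $\exp(\R Z_0)$). Concretely, the circle subgroup $\exp(\R Z_0) \subset K$ lifts to a copy of $\R$ in $\tilde G$, so $\tilde K$ contains a central $\R$. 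The plan is to use this central $\R \subset \tilde K$ to rule out compact quotients: I would show that $\tilde G/\tilde K$ carries a natural $\tilde G$-invariant closed $2$-form $\omega$ (pulled back from the Kähler form on $G/K$ via the diffeomorphism $\pi$) which is \emph{exact} as a $\tilde G$-invariant form --- precisely because the circle factor in $K$ becomes an $\R$ factor in $\tilde K$, the Chern class obstruction disappears and $\omega = d\eta$ for a $\tilde G$-invariant $1$-form $\eta$. This is the nonlinear analogue of the mechanism behind Fact~\ref{fact:previous}: an invariant exact form of top-power degree equal to the dimension would directly kill compactness, but here I expect instead to argue that $\omega^n$ (where $2n = \dim G/K$) is invariant, exact, and a volume form, so its integral over any compact manifold locally modelled on $\tilde G/\tilde K$ would be both zero (by Stokes, since exactness is preserved under the locally-modelled structure with the cocycle condition) and nonzero (since $\omega^n$ is a volume form), a contradiction.

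Carrying this out, the steps in order are: (1) identify $\z(\k) = \R Z_0$ and the lift of $\exp(\R Z_0)$ to $\tilde G$ as a central $\R \subset \tilde K$; (2) construct the $\tilde G$-invariant $1$-form $\eta$ on $\tilde G/\tilde K$ dual to $Z_0$ --- this is where the noncompactness of $\tilde K$ is essential, since on $G/K$ the would-be primitive of $\omega$ is obstructed by the fact that $Z_0$ generates a circle rather than a line; concretely, $\eta$ corresponds to the element of $(\g/\k)^\ast$... actually $\z(\k) \subset \k$, so one must be more careful: $\eta$ lives at the Lie-group level as the left-invariant form on $\tilde G$ dual to $Z_0$, which descends to $\tilde G/\tilde K$ precisely because $Z_0$ is now $\tilde K$-invariant and the relevant circle has been unwound; (3) verify $d\eta = \omega$ using the structure equations of the Hermitian symmetric pair; (4) invoke the cocycle condition to pull $\omega = d\eta$ back to any compact manifold $M$ locally modelled on $\tilde G/\tilde K$, obtaining a closed $2$-form on $M$ that is still exact, hence $\int_M \omega^n = 0$; (5) note $\omega^n$ is nowhere zero, contradiction; (6) specialize to Clifford--Klein forms as an immediate corollary. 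The main obstacle I anticipate is step (2)--(3): making precise \emph{why} the primitive $\eta$ exists invariantly on $\tilde G/\tilde K$ but not on $G/K$, i.e.\ carefully tracking the center of $\tilde G$ versus $G$ and showing the relevant invariant-form computation at the Lie-algebra level genuinely produces a $\tilde G$-invariant primitive after passing to the cover. One must also confirm that "locally modelled with the cocycle condition" is exactly strong enough to transport the invariant exact form (this is why the Remark emphasizes the cocycle condition is indispensable here).
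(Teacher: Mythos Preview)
Your instinct to use the K\"ahler form $\omega$ and exploit the unwinding of the central circle is exactly right, and this is also what the paper does. But step~(2) contains a fatal gap: the $\tilde G$-invariant primitive $\eta$ of $\omega$ on $\tilde G/\tilde K$ that you want does not exist. First, $\tilde K = \pi^{-1}(K)$ is \emph{connected}, not of infinite component group: since $G/K$ is contractible the inclusion $K \hookrightarrow G$ induces an isomorphism on $\pi_1$, so the preimage of the connected set $K$ under the universal covering is connected (indeed $\tilde K$ is the universal cover of $K$). Consequently the complex of $\tilde G$-invariant forms on $\tilde G/\tilde K$ is $(\Lambda(\g/\k)^\ast)^{\tilde K} = (\Lambda(\g/\k)^\ast)^{\k}$, literally the same as for $G/K$. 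Since $\omega^{N/2}$ is a nonzero multiple of the invariant volume form, $[\omega]_{\g,\k} \neq 0$ in $H^2(\g,\k;\R)$, so $\omega$ has no $\k$-invariant primitive in $(\g/\k)^\ast$, hence no $\tilde G$-invariant primitive on $\tilde G/\tilde K$. Concretely, the left-invariant $1$-form on $\tilde G$ dual to $Z_0$ does not descend to $\tilde G/\tilde K$: because $Z_0 \in \k$, this form is nonzero on the vertical directions of the fibration $\tilde G \to \tilde G/\tilde K$.

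The paper repairs this by shifting the comparison one step down. The point is not that $\omega$ becomes exact among invariant forms on $\tilde G/\tilde K$, but that the \emph{maximal compact subgroup} of $\tilde K$ is strictly smaller than $\tilde K$ itself: the circle $\exp(\R Z_0) \subset K$ lifts to a copy of $\R$ in $\tilde K$, so $\tilde K$ is noncompact, and its maximal compact subgroup is the connected subgroup $\tilde K_{ss}$ with Lie algebra $\k_{ss} = [\k,\k]$. One then invokes the nonlinear generalization of Fact~\ref{fact:previous} (from \cite{Mor17PRIMS}) with $H = \tilde K$ and $K_H = \tilde K_{ss}$, so the relevant obstruction is non-injectivity of $i : H^\bl(\g,\k;\R) \to H^\bl(\g,\k_{ss};\R)$. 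Now the functional $F$ dual to $Z_0$ via the Killing form \emph{does} lie in $((\g/\k_{ss})^\ast)^{\k_{ss}}$, since $Z_0 \notin \k_{ss}$, and $dF = \omega$ in that larger complex; hence $i([\omega]_{\g,\k}) = [\omega]_{\g,\k_{ss}} = 0$ while $[\omega]_{\g,\k} \neq 0$. This is precisely where passing to the universal cover changes the problem: for the linear group $G$ the subgroup $K$ is already compact, $K_H = K$, and $i$ is the identity---exactly the vacuous case you noticed at the outset.
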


\begin{proof}
We shall apply the following generalization of
Fact~\ref{fact:previous}:
\begin{fact}[{\cite[Th.~1.2\ (2) and Prop.~5.1]{Mor17PRIMS}}]
Let $G$ be a (possibly nonlinear) Lie group such that
the adjoint action of $\g$ on itself is trace-free.
Let $H$ be a closed subgroup of $G$ with finitely many
connected components such that $\h$ is reductive in $\g$
(i.e.\ that $\g$ is completely reducible as an $\h$-module).
Let $K_H$ be a maximal compact subgroup of $H$.
If the homomorphism $i : H^\bl(\g, \h; \R) \to H^\bl(\g, \k_H; \R)$
is not injective,
there does not exist a compact manifold locally modelled on $G/H$.
\end{fact}
Notice that $\tilde{K}$ is noncompact since
$\pi$ is an infinite covering.
Let $\tilde{K}_{ss}$ be a connected Lie subgroup of $K$
corresponding to $\k_{ss} = [\k, \k]$.
It follows easily from the Cartan--Malcev--Iwasawa--Mostow theorem
(see e.g.\ \cite[Ch.~VII, Th.~1.2]{Bor98} or
\cite[Ch.~XV, Th.~3.1]{Hoc65})
that $\tilde{K}_{ss}$ is a maximal compact subgroup of $G$.
Thus, it suffices to see that the homomorphism
\[
i: H^\bl(\g, \k; \R) \to H^\bl(\g, \k_{ss}; \R)
\]
is not injective.
Take a nonzero element $X$ of the centre of $\k$.
Let $F$ be the element of $\g^\ast$ corresponding to $X$
via the Killing form of $\g$.
We have $\Stab_\g(F) = \k$.
Then, $\omega = dF$ is an element of
$(\Lambda^2 (\g/\k)^\ast)^{\k}$. Note that $\omega$ corresponds to
the $G$-invariant K\"ahler form on $G/K$ under the isomorphism
$(\Lambda^2 (\g/\k)^\ast)^{\k} \simeq \Omega^2(G/K)^G$.
Since $\omega$ is a nondegenerate symmetric form on $\g/\k$,
we have $\omega^{N/2} \neq 0$, where $N = \dim(G/K)$.
Since $H^N(\g, \k; \R) \simeq \R$,
it follows that $[\omega^{N/2}]_{\g, \k} \neq 0$
in $H^N(\g, \k; \R)$.
In particular, $[\omega]_{\g, \k} \neq 0$ in $H^2(\g, \k; \R)$.
On the other hand, since $F \in ((\g/\k_{ss})^\ast)^{\k_{ss}}$,
we have
$[\omega]_{\g, \k_{ss}} = [dF]_{\g, \k_{ss}} = 0$
in $H^2(\g, \k_{ss}; \R)$.
Therefore, the above homomorphism $i$ is not injective.
\end{proof}

\begin{rem}
The homogeneous space $\tilde{G}/\tilde{H}$
might be a rather unnatural object from a geometric viewpoint:
the $\tilde{G}$-action on $\tilde{G}/\tilde{H}$
is not effective unless $\tilde{G} = G$.
However, it sometimes appears in the study of the Lie group
$\tilde{G}$. For instance, in the erratum of \cite{Ati-Sch77},
Atiyah--Schmid considered a homogeneous space $\tilde{G}/\tilde{K}$
in order to construct the discrete series representations of
$\tilde{G}$, where $\tilde{G}$ is a nonlinear finite covering of
a linear semisimple Lie group $G$ and
$K$ is a maximal compact subgroup of $G$.
In fact, they needed to study the nonlinear case separately since
it is not known if $\tilde{G}/\tilde{K}$
admits a compact Clifford--Klein form, unlike the linear case
to which A. Borel's theorem~\cite{Bor63} applies.
Note that Proposition~\ref{prop:nonlinear}
is not applicable to the case which Atiyah--Schmid studied;
this proposition concerns the case of a nonlinear \emph{infinite}
covering of a linear semisimple Lie group $G$.
\end{rem}

\subsection*{Acknowledgements}

The author would like to express his sincere thanks to
Toshiyuki Kobayashi for his advice and encouragement.
This work was supported by
JSPS KAKENHI Grant Numbers 14J08233 and 17H06784,
the Kyoto University Research Fund for Young Scientists (Start-up)
and the Program for Leading Graduate Schools, MEXT, Japan.

\end{document}